\let\csname equation*\endcsname\relax
\let\csname endequation*\endcsname\relax
\newcommand{\bfxZ} {\bfz}
\newcommand{\Tcal}{\mathcal{T}}
\newcommand{\Jcal}{\mathcal{J}}
\newcommand{\bfz} {\boldsymbol{z}}
\newcommand{\bfA} {\boldsymbol{A}}
\newcommand{\bfB} {\boldsymbol{B}}
\newcommand{\bfx} {\boldsymbol{x}}
\newcommand{\sip} {\! \cdot\!}
\newcommand{\bfxh} {\hat{\boldsymbol{x}}{}}
\newcommand{\bfC} {\boldsymbol{C}}
\newcommand{\dip} {\! :\!}
\newcommand{\sym}{_{\text{sym}}}
\newcommand{\Div}{\mbox{div}\,} 
\newcommand{\Lpar}{\Big(\,}
\newcommand{\Rpar}{\,\Big)}
\newcommand{\rpar}{\hspace*{0.1em}\big)}
\newcommand{\lpar}{\big(\hspace*{0.1em}}
\newcommand{\bfna}{\boldsymbol{\nabla}}
\newcommand{\Rbb} {\mathbb{R}}
\newcommand{\sheq}{\hspace*{-0.1em}=\hspace*{-0.1em}}
\newcommand{\bfAT} {\widetilde{\boldsymbol{A}}{}}
\newcommand{\shsetm}{\hspace*{-0.1em}\setminus\hspace*{-0.1em}}
\newcommand{\Gs}{\G_{s}}
\newcommand{\Gm}{\G_{m}}
\newcommand{\Rm}{R_{m}}
\newcommand{\Rs}{R_{s}}
\newcommand{\G}{\Gamma}
\newcommand{\shcap}{\hspace*{-0.1em}\cap\hspace*{-0.1em}}
\newcommand{\bfy} {\boldsymbol{y}}
\newcommand{\dbfy}{\,\text{d}\bfy}
\newcommand{\shm}{\hspace*{-0.1em}-\hspace*{-0.1em}}
\newcommand{\shin}{\hspace*{-0.1em}\in\hspace*{-0.1em}}
\newcommand{\bfrh} {\hat{\boldsymbol{r}}{}}
\newcommand{\bfr} {\boldsymbol{r}}
\newcommand{\shdeq}{\hspace*{-0.1em}:=\hspace*{-0.1em}}
\newcommand{\iGs} {\int_{\G_{s}}}
\newcommand{\bfs} {\boldsymbol{s}}
\newcommand{\dbfs}{\,\text{d}\bfs}
\newcommand{\obs}{_{\text{obs}}}
\newcommand{\demi} {\inv{2}}
\newcommand{\iGm} {\int_{\G_{m}}}
\newcommand{\labs}{\big|\hspace*{0.1em}}
\newcommand{\rabs}{\hspace*{0.1em}\big|}
\newcommand{\bfm} {\boldsymbol{m}}
\newcommand{\inv}[1]{\dfrac{1}{#1}}
\newcommand{\dbfm}{\,\text{d}\bfm}
\newcommand{\Bcal}{\mathcal{B}}
\newcommand{\OO}{\Omega}
\newcommand{\iBcal}{\int_{\Bcal}}
\newcommand{\Lcb}{\Big\{\hspace*{0.1em}}
\newcommand{\Rcb}{\hspace*{0.1em}\Big\}}
\newcommand{\dVx}{\,\text{d}V_{x}}
\newcommand{\ssup}{^{\text{s}}}
\newcommand{\shneq}{\hspace*{-0.1em}\not=\hspace*{-0.1em}}
\newcommand{\bfM} {\boldsymbol{M}}
\newcommand{\bfW} {\boldsymbol{W}}
\newcommand{\bfg} {\boldsymbol{g}}
\newcommand{\bfh} {\boldsymbol{h}}
\newcommand{\comp}{_{\text{comp}}}
\newcommand{\Cbb} {\mathbb{C}}
\newcommand{\bfT} {\boldsymbol{T}}
\newcommand{\bfI} {\boldsymbol{I}}
\newcommand{\iB}{\int_{B}}
\newcommand{\lsqb}{\big[\hspace*{0.1em}}
\newcommand{\rsqb}{\hspace*{0.1em}\big]}
\newcommand{\bfH} {\boldsymbol{H}}
\newcommand{\shsubs}{\hspace*{-0.1em}\subset\hspace*{-0.1em}}
\newcommand{\bfG} {\boldsymbol{G}}
\newcommand{\MS}{\mbox{\boldmath $\mathcal{M}$}}
\newcommand{\nnum}{\notag \\}
\newcommand{\Mcal}{\mathcal{M}}
\newcommand{\tens}{\hspace*{-1pt}\otimes\hspace*{-1pt}}
\newcommand{\derpq}[3]{\dfrac{\partial^{2} #1}{\partial #2 \partial #3}}
\newcommand{\iG} {\int_{\G}}
\newcommand{\eps}{\varepsilon}
\newcommand{\lcb}{\big\{\hspace*{0.1em}}
\newcommand{\rcb}{\hspace*{0.1em}\big\}}
\newcommand{\bfn} {\boldsymbol{n}}
\newcommand{\dV}{\;\text{d}V}
\newcommand{\dS}{\,\text{d}S}
\newcommand{\tila}{\tilde{a}}
\newcommand{\shp}{\hspace*{-0.1em}+\hspace*{-0.1em}}
\newcommand{\shl}{\hspace*{-0.1em}<\hspace*{-0.1em}}
\newcommand{\bfR} {\boldsymbol{R}}
\newcommand{\bfD} {\boldsymbol{D}}
\newcommand{\Tsup}{^{\text{\scriptsize T}}}
\newcommand{\RS}{\ensuremath{\mbox{\boldmath $\mathcal{R}$}}}
\newcommand{\bfK} {\boldsymbol{K}}
\newcommand{\Rcal}{\mathcal{R}}
\newcommand{\bfsh} {\hat{\boldsymbol{s}}{}}
\newcommand{\bfbh} {\hat{\boldsymbol{b}}{}}
\newcommand{\Lsqb}{\Big[\hspace*{0.1em}}
\newcommand{\Rsqb}{\hspace*{0.1em}\Big]}
\newcommand{\bfzh} {\hat{\boldsymbol{z}}{}}
\newcommand{\bfhb} {\bar{\boldsymbol{b}}{}}
\newcommand{\bfQ} {\boldsymbol{Q}}
\newcommand{\bfbe}{\boldsymbol{\beta}}
\newcommand{\bfsig}{\boldsymbol{\sigma}}
\newcommand{\bfq} {\boldsymbol{q}}
\newcommand{\bfS} {\boldsymbol{S}}
\newcommand{\hatS}{\hat{S}}
\newcommand{\bfyh} {\hat{\boldsymbol{y}}{}}
\newcommand{\suite}[1][0ex]{\notag \\[#1] & \mbox{}\hspace{15pt}}
\newcommand{\shcup}{\hspace*{-0.1em}\cup\hspace*{-0.1em}}
\newcommand{\jump}[1]{[\![ {#1} ]\!]}
\newtheorem{lemma}{Lemma}
\newtheorem{remark}{Remark}
\newtheorem{theorem}{Theorem}
\newcommand{\Phik}{\Phi_{\kappa}}
\newcommand{\Phiz}{\Phi_0}
\begin{document}

\title[Analysis of topological derivative as a tool for qualitative identification]{Analysis of topological derivative as a tool for qualitative identification}

\author{Marc Bonnet$^1$ and Fioralba Cakoni$^2$}
\address{$^1$ POEMS (CNRS, INRIA, ENSTA), ENSTA, 828 boulevard des Mar\'echaux, 91120 Palaiseau, France.}
\address{$^2$ Department of Mathematics, Rutgers University, 110 Frelinghuysen Road,  Piscataway, NJ 08854-8019, USA.}

\ead{mbonnet@ensta.fr, fc292@math.rutgers.edu}
\vspace{10pt}
\begin{indented}
\item[] 
\end{indented}
\begin{abstract}
The concept of topological derivative has proved effective as a qualitative inversion tool for a wave-based identification of finite-sized objects. Although for the most part, this approach remains  based on a heuristic interpretation of the topological derivative, a first attempt toward  its mathematical justification was done in Bellis et al. (\emph{Inverse Problems} \textbf{29}:075012, 2013) for the case of isotropic media with far field data and inhomogeneous refraction index. 
Our paper extends the analysis there to the case of anisotropic scatterers and background with near field data. Topological derivative-based imaging functional is analyzed using a suitable factorization of the near fields, which became achievable thanks to a new volume integral formulation recently obtained in Bonnet (\emph{J. Integral Equ. Appl.} \textbf{29}:271--295, 2017). Our results include justification of sign heuristics for the topological derivative in the isotropic case with jump in the main operator and for some cases of anisotropic media, as well as verifying its decaying property in the isotropic case with near field spherical measurements configuration situated far enough from the probing region.  
\end{abstract}

\section{Introduction}

Inverse scattering has undergone intense investigation over the last quarter century, in particular due to the growth and flourishing of qualitative methods which provide robust and computationally effective alternatives to more traditional approaches based on successive linearizations or PDE-constrained optimization, see~\cite{kirsch:grinberg,cakoni:colton:14,cakoni:16} for expository material and references. Qualitative identification methods usually consist in \emph{sampling} a spatial region of interest with points $\bfxZ$ at which an imaging function $\phi$ is evaluated; this is in particular the case for (generalized) linear sampling methods and factorization methods. The latter are moreover backed by firm and comprehensive mathematical justifications.

An alternative basis for qualitative identification is provided by the concept of \emph{topological derivative} (TD). The TD of an objective functional $\Jcal$ quantifies the leading perturbation to $\Jcal$ induced by the nucleation of a trial object of vanishingly small radius $\delta$ at a given location $\bfxZ$ in the background (i.e. defect-free) medium. On taking $\Jcal$ as a misfit functional of the kind typically used for inversion by PDE-constrained optimization, the value of the TD of $\Jcal$ at $\bfxZ$, herein denoted $\Tcal(\bfxZ)$, provides a basis for a sampling approach (by choosing $\phi(\bfxZ):=\Tcal(\bfxZ)$). The underlying heuristic idea is that $\Tcal(\bfxZ)$ is intuitively expected to take pronounced negative values at the correct location of a sought defect, consistently with the notion of minimizing $\Jcal$. This heuristic thus involves both the magnitude (expected to be largest) and the sign (expected to be negative) of $\Tcal(\bfxZ)$ for $\bfxZ$ near the defect support.

The idea of TD was initially introduced and formalized as a computational aid for topology optimization problems~\cite{Esch,soko:zoch:99}, and has thereafter also proved effective for revealing hidden objects in a variety of inverse scattering situations, see e.g.~\cite{B-2003-5,boj:07,B-2012-6,dom:gib:esq:05,agjk:11,B-2005-17,B-2016-12,laurain:13,lelouer:17}. In particular, despite the asymptotic character of the mathematical concept of TD, numerous available computational results show its ability to qualitatively identify spatially-extended objects. The objective functional $\Jcal$ underpinning $\Tcal(\bfxZ)$ in practice often expresses the misfit between data and its model prediction in a least-squares sense, which has the advantage of making TD-based imaging workable for any available data. Moreover, the practical evaluation of $\bfxZ\mapsto\Tcal(\bfxZ)$ only requires the incident field and an adjoint field~\cite{cea:mas}, so is both straightforward and moderately expensive from a computational standpoint.

The definition and formulation of $\Tcal(\bfxZ)$ for given physical setting and objective functional is a mathematically rigorous operation. By contrast, its subsequent application towards imaging defects by using the previously-described heuristics is still supported mainly by computational evidence and lacks a comprehensive mathematical foundation. Theoretical investigations about TD-based imaging have begun only recently. The imaging of a single small scatterer in an acoustic medium is mathematically studied in \cite{agjk:11}, where proofs of stability with respect to medium or measurement noises are also given; this framework has since then been extended to elastodynamics~\cite{ammari:13b} and electromagnetism~\cite{wahab:15}. The high-frequency limiting behavior of a TD imaging functional is analyzed in~\cite{guz:pour:15}. The qualitative identification of spatially extended objects, which is the main focus of this work, was first considered in~\cite{B-2013-01} for a rather idealized setting involving $L^2$ misfit cost functionals incorporating far-field data and scatterers characterized by a inhomogeneous refraction index. It was shown in that context that the magnitude component of the heuristic interpretation is valid without limitations, whereas the guaranteed correctness of the sign component is subject to an inequality (involving the operating frequency and the obstacle size and contrast) that essentially requires the scatterer to be ``moderate enough''.

In this work, we continue the line of investigation initiated in~\cite{B-2013-01} by considering situations where (i) the medium properties are characterized by a tensor-valued coefficient appearing in the principal, second-order term of the governing differential operator (rather than a refraction index affecting the zeroth-order term) and (ii) data is collected at a finite distance (rather than in the far field). The (uniform) host medium and the scatterer may both be anisotropic. Our main aim is to establish conditions under which the usual heuristic for TD imaging is valid. Towards this aim, we formulate the forward scattering problem as a volume integral equation, and take advantage of a recently-proposed reformulation of such volume integral equation ~\cite{B-2016-06} which allows to express $\Tcal(\bfxZ)$ separately in terms of  the material contrast and a contrast-independent normalized integral operator; this in particular facilitates the handling of material anisotropy. Some of our main findings are similar in nature to those of~\cite{B-2013-01}; in particular the sign component of the TD heuristic is again found to be valid within  a ``moderate enough scatterer'' condition, here expressed in terms of the norm of the normalized integral operator. We emphasize that this condition is less stringent than a requirement that the Born approximation be valid. Our other main contribution consists of an asymptotic study of the decay of $|\Tcal(\bfxZ)|$ when the sampling region spanned by $\bfxZ$ is large relative to the obstacle diameter while the measurements are taken far from the sampling region. The expected decay of $\bfxZ\mapsto|\Tcal(\bfxZ)|$ is as a result observed for far-field data (leading-order asymptotics), as expected from e.g.~\cite{B-2013-01}, but also on the next-order asymptotic contribution.
 
The article is organized as follows. In the next section we formulate the direct and inverse scattering problem for anisotropic media for near field data, and introduce the topological derivative as the first order coefficient in the asymptotic expansion of the cost function in terms of the size of the trial inhomogeneities. The excitation and measurement surfaces may not be the same and partial aperture data is allowed under some assumptions.  Only the fields inside the bounded region circumscribed by the excitation or  measurement surface (whatever bounds the larger region) matter in our analysis, hence the discussion presented here includes the case when the scattering problem is formulated in the whole space or in a bounded region, with obvious changes in the  fundamental solution. Section 3 is dedicated to the derivation of explicit expressions for the topological derivative, where a  new volume integral equation for anisotropic media recently obtained in \cite{B-2016-06}  plays an essential role in obtaining a symmetric factorization of TD.  We consider two cases: isotropic scatterers in Section 4 and anisotropic scatterer in Section 5. The study of the former is more complete, namely we provide the justification of the sign heuristic of TD restricted to scatterers of moderate strength in terms of scatterer size, its material contrast and the operating frequency, as well as show the decaying property of TD for sampling points far from the unknown inhomogeneity for spherical near field measurements configuration far enough form the scatterer. The case of anisotropic scatterers is more complicated and partial results on the justification of TD sign heuristic are obtained  in specialized cases such as for anisotropic scatterers embedded in isotropic background and scatterers of one-sign contrasts.  
\section{Formulation of the scattering problem and topological derivative}
We start by setting up some notation conventions which will be used throughout the paper. In expressions such as $\bfA\sip\bfx$ or $\bfB\dip\bfC$, symbols '${}\sip{}$' and '${}\dip{}$' denote single and double inner products, e.g. $(\bfA\sip\bfx)_i=A_{ij} x_j$ and $\bfB\dip\bfC \sheq B_{ij}C_{ij}$, with Einstein's convention of summation over repeated indices implicitly used throughout and component indices always referring to an orthonormal frame. The (Euclidean) norm of a vector or tensor $\bfx$ is denoted by $|\bfx|$, whereas $\|\cdot\|$ indicate norms in function spaces or operator norms. Hat symbols over vectors denote corresponding unit vectors, e.g. $\bfxh:=\bfx/|\bfx|$.

\subsection{Direct scattering problem}

We consider an unbounded, homogeneous reference propagation medium whose constitutive properties can be described by the real-valued symmetric tensor $\bfA\in\Rbb\sym^{3\times3}$, so that (in the absence of any sources in the medium) a propagating wave described by the complex-valued function $u$ satisfies 
\begin{equation}
   -\Div\lpar\bfA\sip\bfna u\rpar - \kappa^2 u = 0 \label{eq:propag}
\end{equation}
(see~\cite{dass:kara:05} for details on scattering in anisotropic media). The medium hosts an unknown inhomogeneity with compact support $B\subset\Rbb^3$ whose material properties are characterized by $\bfAT\in\Rbb\sym^{3\times3}$. Both $\bfA$ and $\bfAT$ are positive definite. The perturbed medium can then be characterized by $\bfA_B\in L^{\infty}(\Rbb^3;\Rbb\sym^{3\times3})$ such that
\[
  \bfA_B := \bfAT \quad \text{in $B$,} \qquad \bfA_B := \bfA \quad \text{in $\Rbb^3\shsetm\overline{B}$}
\]
Let $\Gs$ and $\Gm$ denote two closed surfaces, which respectively support probing excitations and measurements. We denote by $\Rm$ and $\Rs$ the bounded domains enclosed by $\Gm$ and $\Gs$. We will consider the following possibilities for the source / measurement configuration: (i) $\Rm=\Rs$, i.e. $\Gm=\Gs$; (ii) $\overline{\Rm}\Subset\Rs$, i.e. $\Gm$ is inside $\Gs$; (iii) $\overline{\Rs}\Subset\Rm$, i.e. $\Gs$ is inside $\Gm$. In all cases, $B\Subset R$, the \emph{region of interest} $R$ being defined by $R:=\Rs\shcap\Rm$, i.e. both $\Gs$ and $\Gm$ surround the unknown inhomogeneity.

\noindent
This work will make frequent use of single-layer potentials created by superpositions of sources on $\Gs$ or $\Gm$. Let the single-layer potential operator $S_{r\alpha}:H^{-1/2}(\G_{\alpha})\to H^1(R)$ ($\alpha\sheq{m,s}$) be defined by
\begin{equation}
  S_{r\alpha}\varphi(\bfx)
 = \int_{\G_{\alpha}} \Phik(\bfx\shm\bfy) \varphi(\bfy) \dbfy \qquad \bfx\shin R,\;\alpha\sheq{m,s}, \label{S:def}
\end{equation}
where $\Phik(\bfx\shm\bfy)$ is the fundamental solution for the background medium, satisfying 
\begin{equation}\label{fund:gov:eq}
  -\Div\lpar\bfA\sip\bfna \Phik(\bfx\shm\bfy)\rpar - \kappa^2 \Phik(\bfx\shm\bfy) = \delta(\bfx\shm\bfy) \qquad \bfx\in{\mathbb R}^3\setminus\{\bfy\}
\end{equation}  
  together with the outgoing radiation condition at infinity. For a generic wave $u$, the radiation condition involved in problems~\eqref{eq:propag} and~\eqref{S:def} is (see \cite{dass:kara:05})
  \begin{equation}
  \lpar \bfrh\sip\bfA^{-1}\sip\bfrh \rpar^{1/2} \bfrh\sip\bfA^{-1}\sip\bfna u(\bfr) - \mathrm{i}\kappa u(\bfr)
 = O(|\bfr|^{-2}) \qquad |\bfr|\to\infty,\label{rad:cond}
\end{equation}
and reduces to the usual Sommerfeld condition if the medium is isotropic. An explicit expression of  $\Phik$ is given in \cite{dass:kara:05} by equation (9). For any density $\varphi$, the field $w\shdeq S_{r\alpha}\varphi$ solves~\eqref{eq:propag} in $\Rbb^3\shsetm\G_{\alpha}$.

\noindent
Towards the identification of $B$, the medium is excited by source densities $g\in H^{-1/2}(\Gs)$, creating incident fields $u$ that are given by single-layer potentials
\[
  u(\bfx) = S_{rs}g(\bfx), \qquad \bfx\shin\Rbb^3.
\]
In the perturbed medium, this excitation gives rise to the total field $u^g_B$ such that
\[
  -\Div\lpar\bfA_B\sip\bfna u^g_B\rpar - \kappa^2 u^g_B = g\,\delta_{\Gs} \qquad \text{and radiation condition}
\]
(here, since the incident field is radiating, the total field is radiating too). By linear superposition, we have
\[
  u^g_B(\bfx) = \iGs u_B(\bfx;\bfs) g(\bfs) \dbfs \qquad \bfx\shin\Rbb^3
\]
where $u_B(\cdot;\bfs)$ solves 
\begin{equation}
  -\Div\lpar\bfA_B\sip\bfna u_B\rpar - \kappa^2 u_B = \delta(\cdot-\bfs) \qquad \text{and radiation condition.} \label{uB(m;s)}
\end{equation}

\noindent
In the present framework (where sources and measurements are not assumed to be in the far field), point sources and their superposition as potentials replace plane waves and their superposition as Herglotz wave functions used in e.g.~\cite{B-2013-01}.

\subsection{Cost functional} We assume the knowledge on $\Gm$ of a measurement of $u\obs=u\obs(\cdot;\bfs)$ of the field $u_B(\cdot;\bfs)$ for each source location $\bfs\shin \Gs$ and formulate the problem of identifying $B$ in terms of the minimization of a cost functional. Letting $D$ denote the support of a trial inhomogeneity, the least-squares cost functional
\begin{equation}
  \Jcal(D) := \demi \iGs \iGm \labs u_D(\bfm;\bfs)-u\obs(\bfm;\bfs) \rabs^2 \dbfm \dbfs, \label{J:def}
\end{equation}
is the most common basis for such optimization-based identification. For reasons that will appear later, we will consider the modified form
\begin{equation}
  \Jcal_E(D) := \demi \iGs \iGs \labs \lpar Eu_D(\bfs';\bfs)-Eu\obs(\bfs';\bfs) \rpar \rabs^2 \dbfs\dbfs' \label{Js:def}
\end{equation}
of the cost functional~\eqref{J:def}, where $E:H^{1/2}(\Gm)\to H^{1/2}(\Gs)$ is a bounded linear operator (to be specified later) which produces an ``equivalent measurement'' $Eu\obs$ and its model prediction $Eu_D$ that are defined on the source surface $\Gs$ (so $E$ acts on the first variable of the two-point functions $u_D,u\obs$). Moreover, to facilitate the theoretical analysis that follows, we idealize the situation further by assuming the data to be noise-free, i.e. $u\obs(\cdot;\bfs)=u_B(\cdot;\bfs)$. For computations of topological derivative inversion with noisy data in the far-field case for the isotropic media see e.g. \cite{B-2013-01,B-2005-17}.

\subsection{Asymptotic of the cost functional} In this approach, the medium is ``sampled'' by means of trial inhomogeneities $B_{\delta}(\bfz)$ of support $B_{\delta}(\bfz)\sheq\bfz+\delta\Bcal$ and size $\delta>0$, centered at a given point $\bfz\shin\OO$ and endowed with specified material constants $\bfA_z$. Without loss of generality, $\bfz$ can be chosen as the center of $B_{\delta}$, i.e. such that
\begin{equation}
  \iBcal \bfx \dVx = \mathbf{0}. \label{center}
\end{equation}
We then set $D=B_{\delta}=B_{\delta}(\bfz)$ in the cost functional~\eqref{J:def}. Denoting by $u_{\delta}:=u_{B_{\delta}}$ the total field arising in this situation and remembering the error-free assumption made for the measurement, we then define the cost function $J(\delta)=J(\delta;\bfz)$ in terms of $\Jcal$ by
\begin{equation}
  J(\delta) = \Jcal_E(B_{\delta}) = \demi \iGs \iGs \labs Eu_{\delta}(\bfs';\bfs)-Eu_B(\bfs';\bfs) \rabs^2 \dbfs\dbfs' \label{Jeps:def}
\end{equation}
The \emph{topological derivative} $\Tcal(\bfz)$ of $J$ at $\bfz$ is then defined as the leading coefficient in the following expansion of $J(\delta) \shm J(0)$ in powers of $\delta$:
\begin{equation}
  J(\delta) = J(0) + \delta^3\Tcal(\bfz) + o(\delta^3). \label{T:def}
\end{equation}
In view of~\eqref{Jeps:def} and~\eqref{T:def}, the topological derivative $\Tcal(\bfz)$ can be evaluated by identification from \cite{B-2013-01,B-2005-17}:
\begin{equation}
  -\Re \Lcb \iGs\iGs \overline{Eu_{\delta}\ssup(\bfs';\bfs)} \, E u_B\ssup(\bfs';\bfs) \dbfs' \dbfs \Rcb = \delta^3\Tcal(\bfz) + o(\delta^3), \label{T:def2}
\end{equation}
where $u_B\ssup:=u_B\shm u$ and $u_{\delta}\ssup:=u_{\delta}\shm u$ are the scattered fields associated with $u_B$ and $u_{\delta}$, respectively.\enlargethispage*{1ex}

\section{Explicit expression of the topological derivative}

We now have all the ingredients to develop from~\eqref{T:def2} an expression of the topological derivative $\Tcal(\bfz)$ that is convenient for its analysis as an identification tool.

\subsection{Representation of scattered fields}

Recalling known results on the solution's asymptotics (which incidentally explain the expected $O(\delta^3)$ leading order in~\eqref{T:def2}, see e.g. \cite{agjk:11,col:83,B-2005-17}),  and given our choice of incident fields, the scattered field for the trial inhomogeneity $B_{\delta}$ is given at any $\bfx\shneq\bfz$ by the expansion
\begin{equation}
  u_{\delta}\ssup(\bfx;\bfs)
 = \delta^3 W(\bfx;\bfs) + o(\delta^3), \qquad
 W(\bfx;\bfs) := \bfna \Phik(\bfx\shm\bfz)\sip\bfM_z\sip\bfna \Phik(\bfz\shm\bfs), \label{exp:ur}
\end{equation}
where $\bfM_z:=\bfM(\Bcal,\bfA_z)\in\Rbb\sym^{3\times3}$ denotes the polarization tensor of the normalized trial inhomogeneity~\cite{agjk:11,ced:mos:vog}.

\noindent
Moreover, the scattered field for the true inhomogeneity has the representation
\begin{equation}
  u_B\ssup(\bfx;\bfs) = \bfW_{\kappa}[\bfh](\bfx), \label{exp:uB:W}
\end{equation}
where  $\bfW_{\kappa}$ is the volume potential defined for any density $\bfg\in L^2\comp(\Rbb^3;\Cbb^3)$ by
\begin{equation}
  \bfW_{\kappa}[\bfg](\bfx) = \int_{\Rbb^3} \bfna \Phik(\bfx\shm\bfy)\sip\bfg(\bfy) \dbfy \label{W:def}
\end{equation}
and the density $\bfh=(\bfAT\shm\bfA)\sip\bfna u_B(\cdot,\bfs)\in L^2(B;\Cbb^3)$ solves the singular volume integral equation (VIE)
\begin{equation}
  \bfT\bfh = (\bfAT\shm\bfA)\sip\bfna u \quad  \text{in $B$}, \qquad \text{with \ } \bfT := \bfI-(\bfAT\shm\bfA)\sip\bfna\bfW_{\kappa} \label{h:VIE}
\end{equation}
(note that $\text{supp}(\bfAT\shm\bfA)=\overline{B}$). The singular integral operator $\bfT:L^2(B;\Cbb^3)\to L^2(B;\Cbb^3)$ is known to be invertible with bounded inverse. Solving equation~\eqref{h:VIE}, using~\eqref{exp:uB:W} and recalling the definition of $u$, we obtain
\begin{equation}
  u_B\ssup(\bfx;\bfs) = \iB \bfna \Phik(\bfx\shm\bfy)\sip\lsqb\bfM_B\bfna \Phik(\cdot\shm\bfs)\rsqb(\bfy) \dbfy \label{exp:uB}
\end{equation}
with the solution operator $\bfM_B$ defined for any $\bfg\shin L^2(B;\Cbb^3)$ by $\bfM_B\bfg:=\bfh$ with $\bfh$ solving  $\bfT\bfh=(\bfAT\shm\bfA)\cdot\bfg$. We refer the reader to \cite{B-2016-06} for more details on how these expressions are obtained.

\subsection{Source-to-measurement operators and their factorization}

Let the measurement operators $F_B$ and $F_z$ associated with the true and trial scattered fields be defined such that $\gamma_m u_B=F_Bg$ and $\gamma_mu_{\delta}=F_zg + o(\delta^3)$, where $\gamma_m$ denotes the Dirichlet trace operator on $\Gm$ and $g\shin H^{-1/2}(\Gs)$ is any excitation applied on $\Gs$. In view of representations~\eqref{exp:ur} and~\eqref{exp:uB}, we have
\begin{equation}
  F_z = \overline{\bfH}{}^{\star}_{zm} \lpar \delta^3\bfM_B \rpar \bfH_{zs} + o(\delta^3), \qquad F_B = \overline{\bfH}{}^{\star}_{Bm} \bfM_B \bfH_{Bs} \label{nsym:F}
\end{equation}
where the operators $\bfH_{z\alpha}:H^{-1/2}(\G_{\alpha})\to\Cbb^3$ and $\bfH_{B\alpha}:H^{-1/2}(\G_{\alpha})\to L^2(B;\Cbb^3)$ are defined by
\[
  \bfH_{z\alpha}\varphi = \bfna S_{r\alpha}\varphi(\bfz), \qquad \bfH_{B\alpha}\varphi
  = \bfna S_{r\alpha}\varphi\mid_B =: \bfna S_{B\alpha}\varphi \qquad \alpha\sheq m,s.
\]
in terms of the single-layer potential operator~\eqref{S:def}. Here  $ \bfH_{z\alpha}^\star: {\mathbb C}^3\to H^{1/2}(\G_{\alpha})$ and $\bfH_{B\alpha}^\star:L^2(B;\Cbb^3)\to H^{1/2}(\G_{\alpha})$ denote the conjugate transpose which we will refer to as adjoint (note that the duality pairing $H^{-1/2},H^{1/2}$ is with respect to the $L^2$ pivot space). The measurement operators $F_B,F_z$ are thus expressed by~\eqref{nsym:F} as \emph{non-symmetric} factorizations, a feature previously noticed in e.g.~\cite{audibert,audibert:haddar:17,hu:14}. Following~\cite{audibert}, symmetric factorizations can be obtained with the help of the following lemma:
\begin{lemma}\label{E:lemma}
Assume that $\kappa^2$ is not a Dirichlet eigenvalue for the Laplace operator in $R$. If the source/measurement configuration is such that either $\Gm\sheq\Gs$ or $\Gm\shsubs\Rs$ (cases (i) and (ii) of the Introduction), we have
\[
  S^{\star}_{ms}S^{-1}_{mm}\overline{\bfH}{}^{\star}_{Bm} = \bfH^{\star}_{Bs}.
\]
where $S_{ms}:=\gamma_{m}S_{rs}$ while $S_{mm}\shdeq\gamma_{m}S_{rm}$ is the single-layer integral operator on $\Gm$.
\end{lemma}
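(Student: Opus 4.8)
The plan is to make the three ``adjoint'' operators on the left-hand side explicit and thereby recognize the claimed identity as an instance of uniqueness for the exterior Dirichlet problem. First I would unwind the definitions. Using Fubini and the symmetry $\Phik(\bfr)=\Phik(-\bfr)$ of the fundamental solution, one finds that $S^{\star}_{ms}$ is the single-layer operator with source on $\Gm$ and evaluation on $\Gs$, i.e.\ $S^{\star}_{ms}=\gamma_{s}S_{rm}$, while $S_{mm}$ is symmetric. Next, reading off~\eqref{nsym:F} (in which $F_{B}g=\gamma_{m}u_{B}\ssup=\gamma_{m}\bfW_{\kappa}[\bfh]$), one gets $\overline{\bfH}{}^{\star}_{Bm}\psi=\gamma_{m}\bfW_{\kappa}[\psi]$ and, likewise, $\bfH^{\star}_{Bs}\psi=\gamma_{s}\bfW_{\kappa}[\psi]$: both members of the asserted identity are thus traces, on $\Gm$ and on $\Gs$ respectively, of one and the same radiating field $v:=\bfW_{\kappa}[\psi]$, whose source is supported in $\overline{B}$. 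The statement to prove becomes: $S^{\star}_{ms}S^{-1}_{mm}$ sends the Dirichlet trace of $v$ on $\Gm$ to its Dirichlet trace on $\Gs$.

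Second, I would use the geometry of cases (i) and (ii), in which $R=\Rm$ (so the spectral hypothesis is exactly the invertibility condition for the single-layer operator $S_{mm}$ on $\Gm$), $B\Subset\Rm$, and $\Gs\subset\Rbb^{3}\setminus\Rm$ (with $\Gs=\Gm$ in case (i)). Set $f:=\gamma_{m}v\in H^{1/2}(\Gm)$ and $q:=S^{-1}_{mm}f\in H^{-1/2}(\Gm)$, and form the single-layer potential $V:=S_{rm}q$. Then $V$ is radiating, satisfies the background equation in $\Rbb^{3}\setminus\Gm$, and $\gamma_{m}V=S_{mm}q=f$ by continuity of the single-layer potential across $\Gm$. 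On the other hand, since the source of $v$ lies in $B\Subset\Rm$, the field $v$ is radiating and satisfies the background equation in the source-free exterior $\Rbb^{3}\setminus\overline{\Rm}$, with $\gamma_{m}v=f$ there.

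Third, both $V$ and $v$ then solve the \emph{same} exterior Dirichlet problem in $\Rbb^{3}\setminus\overline{\Rm}$ (background PDE, radiation condition~\eqref{rad:cond}, datum $f$ on $\Gm$). Uniqueness for that problem holds for the anisotropic Helmholtz equation with no restriction on $\kappa$ --- by the radiation condition together with Rellich's lemma (see~\cite{dass:kara:05}) and unique continuation --- so $V\equiv v$ in $\Rbb^{3}\setminus\overline{\Rm}$. Taking traces on $\Gs\subset\Rbb^{3}\setminus\Rm$ yields $\gamma_{s}V=\gamma_{s}v$, that is, $S^{\star}_{ms}S^{-1}_{mm}\,\overline{\bfH}{}^{\star}_{Bm}\psi=\gamma_{s}S_{rm}q=\gamma_{s}V=\gamma_{s}v=\bfH^{\star}_{Bs}\psi$; since $\psi\in L^{2}(B;\Cbb^{3})$ is arbitrary, this is the assertion.

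The analytic content --- exterior uniqueness --- is entirely standard, so the step I expect to require the most care is the first: correctly matching $S^{\star}_{ms}$, $S^{-1}_{mm}$, $\overline{\bfH}{}^{\star}_{Bm}$ and $\bfH^{\star}_{Bs}$ with their kernel representations while keeping track of the complex conjugations implicit in the ``$\star$'' and ``$\overline{\cdot}$'' notation, so that both members of the identity are genuinely seen to be traces of the \emph{same} outgoing field. It is also worth recording where the hypotheses enter: the spectral assumption is used only to invert $S_{mm}$, whereas the exterior uniqueness needs no spectral restriction; and the restriction to configurations (i)--(ii) is essential, since the argument relies on $\Gs$ lying in the source-free region $\Rbb^{3}\setminus\overline{\Rm}$ where $V$ and $v$ provably coincide --- in case (iii) the surface $\Gs$ sits inside $\Rm$ and this coincidence can fail.
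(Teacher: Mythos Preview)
Your argument has a genuine gap in exactly the place you flagged as delicate: the conjugation bookkeeping. In this paper ``$\star$'' means the \emph{Hermitian} adjoint (conjugate transpose), not the bilinear transpose. Consequently $S^{\star}_{ms}$ has kernel $\overline{\Phik(\bfs\shm\bfm)}$ (so $S^{\star}_{ms}\neq\gamma_{s}S_{rm}$), $\bfH^{\star}_{Bs}\psi(\bfs)=\int_{B}\overline{\bfna\Phik(\bfs\shm\bfy)}\sip\psi(\bfy)\dbfy$ (so $\bfH^{\star}_{Bs}\psi\neq\gamma_{s}\bfW_{\kappa}[\psi]$), and $S_{mm}$ is symmetric in the bilinear sense but \emph{not} self-adjoint ($S^{\star}_{mm}=\overline{S_{mm}}\neq S_{mm}$). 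With the correct kernels, the two ``incoming'' fields you would have to compare in $\Rbb^{3}\setminus\overline{\Rm}$ are $U_{1}:=\overline{S}_{rm}q$ (kernel $\overline{\Phik}$) with $q=S^{-1}_{mm}\gamma_{m}\bfW_{\kappa}[\psi]$, and $U_{2}(\bfx):=\int_{B}\overline{\bfna\Phik(\bfx\shm\bfy)}\sip\psi(\bfy)\dbfy$. Their traces on $\Gm$ are $\gamma_{m}U_{1}=S^{\star}_{mm}S^{-1}_{mm}\,\overline{\bfH}{}^{\star}_{Bm}\psi$ and $\gamma_{m}U_{2}=\bfH^{\star}_{Bm}\psi$, which do \emph{not} coincide (precisely because $S^{\star}_{mm}\neq S_{mm}$). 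Exterior uniqueness therefore does not yield the claimed equality; the two conjugation errors do not cancel.

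The paper proceeds differently: it passes to the adjoint identity $\overline{\bfH}_{Bm}(S^{\star}_{mm})^{-1}S_{ms}=\bfH_{Bs}$ and compares, for each $\psi_{s}\in H^{-1/2}(\Gs)$, the two Helmholtz solutions $w_{1}:=\overline{S}_{rm}\big[(S^{\star}_{mm})^{-1}S_{ms}\psi_{s}\big]$ and $w_{2}:=S_{rs}\psi_{s}$ \emph{inside} $\Rm$. One is built from $\overline{\Phik}$ and the other from $\Phik$, so they satisfy opposite radiation behaviours and an exterior argument is unavailable; but both are genuine Helmholtz solutions in $\Rm$ with equal trace $S_{ms}\psi_{s}$ on $\Gm$, and \emph{interior} Dirichlet uniqueness (valid exactly under the stated spectral hypothesis, no radiation condition needed) forces $w_{1}=w_{2}$ in $\Rm$, hence $\bfna w_{1}=\bfna w_{2}$ in $B$. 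Thus the spectral assumption is used not only to invert $S_{mm}$ but, crucially, to secure this interior uniqueness; your attempt to replace it by exterior uniqueness is precisely what the mixed $\Phik/\overline{\Phik}$ structure forbids.
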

\begin{proof}
We proceed by proving the (equivalent) adjoint equality $\overline{\bfH}{}_{Bm}(S^{\star}_{mm})^{-1}S_{ms}=\bfH^{\star}_{Bs}$. This equality also reads $\bfna\overline{S}{}_{Bm}\overline{S}^{-1}_{mm}S_{ms}=\bfna S^{\star}_{Bs}$ in view of the definition of $\bfH_{B\alpha}$ and since $S^{\star}_{mm}=\overline{S}_{mm}$. For any given density $\psi_s\shin H^{-1/2}(\Gs)$, $\overline{S}{}_{Bm}\overline{S}^{-1}_{mm}S_{ms}\psi_s$ and $S^{\star}_{Bs}\psi_s$ are Helmholtz solutions in $\Rm\shsubs\Rs$ and $\Rs$, respectively. Taking the trace on $\Gm$ for both fields, we obtain $\gamma_m\overline{S}{}_{Bm}\overline{S}^{-1}_{mm}S_{ms}\psi_s=S_{ms}\psi_s=\gamma_m S^{\star}_{Bs}\psi_s$. Hence the two Helmholtz solutions, having the same trace on $\Gm$, coincide in $\Rm$. Their gradients therefore also coincide in $\Rm$, and the lemma follows by taking the adjoint.

\noindent
Note that due to the lemma assumptions $\overline{S}{}_{Bm}\overline{S}^{-1}_{mm}S_{ms}\psi_s$ is not a Helmholtz solution outside $\Rm$, since $\Rs$ contains the surface $\Gm$ supporting the density $\overline{S}^{-1}_{mm}S_{ms}\psi_s$. This is the reason for the our assumptions.
\end{proof}
\noindent
Therefore, defining the linear bounded operator $E:=S_{ms}^{\star}S^{-1}_{mm}$ from $H^{1/2}(\Gm)$ to $H^{1/2}(\Gs)$ and recalling factorizations~\eqref{nsym:F}, Lemma~\ref{E:lemma} implies the symmetric factorizations
\[
  EF_z = \bfH^{\star}_{zs} \bfM_B \bfH_{zs}, \qquad EF_B = \bfH^{\star}_{Bs} \bfM_B \bfH_{Bs}
\]
or, equivalently:
\begin{align}
  E u_B\ssup(\bfs';\bfs)
 &= \iB \overline{\bfna \Phik(\bfs'\shm\bfy)}\sip\lsqb\bfM_B\bfna \Phik(\cdot\shm\bfs)\rsqb(\bfy) \dbfy, \\
  Eu_{\delta}\ssup(\bfs';\bfs)
 &= \delta^3 \overline{\bfna \Phik(\bfs'\shm\bfz)}\sip\bfM_z\sip\bfna \Phik(\bfz\shm\bfs) + o(\delta^3).
\end{align}
\noindent
For an explicit example of the symmetry restoring-operator $E$, see \ref{seca3}.

\medskip
\noindent
We are finally ready to give for the topological derivative an explicit expression, which is the main object of study in what follows.

\subsection{Topological derivative}
Inserting the above expressions of $E u_B\ssup$ and $Eu_{\delta}\ssup$ in~\eqref{T:def2}, the topological derivative is found to be given by the formula
\begin{multline}
  \Tcal(\bfz)
 = -\Re \Lcb \iGs\iGs\iB \bfna \Phik(\bfs'\shm\bfz)\sip\bfM_z\sip\overline{\bfna \Phik(\bfz\shm\bfs)} \\
  \overline{\bfna \Phik(\bfs'\shm\bfy)}\sip\lsqb\bfM_B\bfna \Phik(\cdot\shm\bfs)\rsqb(\bfy) \ \dbfy \dbfs' \dbfs \Rcb, \label{TD:exp:2}
\end{multline}
which will serve as the main basis for our analysis. This formula can be recast in a more concise, and structure-revealing, form as
\begin{align}
  \Tcal(\bfz)
 &= -\Re\, \Lcb \iB \overline{\bfG(\bfz,\bfy)}\dip \lsqb\MS\bfG \rsqb(\bfz,\bfy) \dbfy \Rcb \nnum
 &= -\Re\, \Lcb \lpar \bfG,\MS\bfG \rpar_{L^2(B;\Cbb^{3\times 3})} \Rcb \label{TD:exp}
\end{align}
where $f,g\mapsto\lpar f,g \rpar$ denotes the sesquilinear form associated with the $L^2(B)$ scalar product (for scalar- or tensor-valued functions as needed), the (two-point, tensor-valued) function $\bfG$ is defined by
\begin{align}
  \bfG(\bfz,\bfy)
 &:= \iGs \overline{\bfna \Phik(\bfs\shm\bfz)}\tens\bfna \Phik(\bfs\shm\bfy) \dbfs, \label{K:def} \\
 \text{i.e. \ }
  G_{ij}(\bfz,\bfy) &= \iGs \overline{\partial_i \Phik(\bfs\shm\bfz)}\;\partial_j \Phik(\bfs\shm\bfy) \dbfs, \notag
\end{align}
and $\MS$ is the $L^2(B;\Cbb^{3\times 3})\to L^2(B;\Cbb^{3\times 3})$ operator given by
\begin{equation}
  \Mcal_{ijk\ell} := (\bfM_z)_{ik}(\bfM_B)_{j\ell}
\end{equation}
\noindent
In addition, $\bfG(\bfz,\bfx)$ is alternatively given by
\begin{equation}
  G_{ij}(\bfz,\bfy) = \derpq{}{z_i}{y_j} L(\bfz,\bfy), \label{G=D2L}
\end{equation}
with the two-point function $L$ defined by
\begin{equation}
  L(\bfz,\bfy) := \iG \overline{\Phik(\bfs\shm\bfz)} \, \Phik(\bfs\shm\bfy) \dbfs. \label{L:def}
\end{equation}
The function $L$ would moreover appear in the counterpart of~\eqref{TD:exp} associated with inhomogeneities characterized solely by a contrast in their refraction index, studied in~\cite{B-2013-01}.
\begin{remark}
{\em The above expressions are also valid if the scattering problem is formulated in a bounded region instead of the entire space. In this case $\Phik(\cdot,\cdot)$ denotes the fundamental solution of the (bounded) background medium satisfying the relevant homogeneous boundary condition.}
\end{remark}

\subsection{Reversed nesting of source/measurement surfaces}
\label{nested}

Lemma~\ref{E:lemma} requires $\Gs$ to surround, or coincide with, $\Gm$. The following reciprocity property  allows to include the case $\Gs\shsubs\Rm$ (i.e. $\Gm$ surrounding $\Gs$, case (iii) of Introduction) in our analysis:
\begin{lemma}\label{uB:reciprocity}
For any inhomogeneity $B$ and any $\bfm,\bfs\shin\Rbb^3$ such that $\bfm\shneq\bfs$ and $\bfm,\bfs\not\in\overline{B}$, the function $u_B(\cdot;\bfs)$ defined by problem~\eqref{uB(m;s)} satisfies $u_B(\bfm;\bfs)=u_B(\bfs;\bfm)$.
\end{lemma}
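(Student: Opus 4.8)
The plan is to establish the symmetry $u_B(\bfm;\bfs)=u_B(\bfs;\bfm)$ by a standard Green's-identity (reciprocity) argument applied to the operator $-\Div(\bfA_B\sip\bfna\,\cdot)-\kappa^2$, taking advantage of the fact that $\bfA_B$ is symmetric-matrix–valued, so that this operator is formally self-adjoint. First I would fix $\bfm\neq\bfs$ with $\bfm,\bfs\notin\overline B$, write $v:=u_B(\cdot;\bfm)$ and $w:=u_B(\cdot;\bfs)$, and consider the bilinear expression $\int(\bfA_B\sip\bfna v)\sip\bfna w - \int(\bfA_B\sip\bfna w)\sip\bfna v$, which vanishes identically by symmetry of $\bfA_B$. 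Integrating by parts on a large ball $\mathcal B_R$ (minus small balls around $\bfm$ and $\bfs$, and being careful that across $\partial B$ the conormal traces $\bfn\sip\bfA_B\sip\bfna v$ and $\bfn\sip\bfA_B\sip\bfna w$ are continuous, so the interior interface contributes nothing) and using the PDE~\eqref{uB(m;s)} for each field, the volume terms collapse and one is left with: a contribution from the small sphere around $\bfm$, which by the local singularity structure of $u_B(\cdot;\bfm)$ (which near $\bfm$ behaves like the free-space fundamental solution $\Phik(\cdot-\bfm)$ since $\bfm\notin\overline B$) evaluates in the limit to $w(\bfm)=u_B(\bfm;\bfs)$; a symmetric contribution from the small sphere around $\bfs$ giving $-v(\bfs)=-u_B(\bfs;\bfm)$; and a contribution from the outer sphere $\partial\mathcal B_R$.

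The key step is to show the outer boundary term vanishes as $R\to\infty$. Both $v$ and $w$ are outgoing in the sense of the anisotropic radiation condition~\eqref{rad:cond}. The combination appearing on $\partial\mathcal B_R$ is $\int_{\partial\mathcal B_R}\big[(\bfn\sip\bfA\sip\bfna v)\,w-(\bfn\sip\bfA\sip\bfna w)\,v\big]\dS$, and one rewrites $\bfn\sip\bfA\sip\bfna$ in terms of the radial derivative weighted by $\bfrh\sip\bfA^{-1}\sip\bfrh$ as in~\eqref{rad:cond}; substituting $\bfn\sip\bfA\sip\bfna v = \mathrm{i}\kappa(\bfrh\sip\bfA^{-1}\sip\bfrh)^{-1/2}\,\big(\text{leading factor}\big)v+O(|\bfr|^{-2})$ and similarly for $w$, the two $\mathrm{i}\kappa vw$ terms cancel, and the remainder is $O(R^{-2})$ per unit area, hence $O(1)\cdot R^2\cdot R^{-2}\cdot R^{-2}\to 0$ once the $|\bfr|^{-1}$ decay of $v,w$ themselves is also used. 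I would cite~\cite{dass:kara:05} for the precise far-field form of $\Phik$ and of radiating solutions that makes this estimate clean.

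The main obstacle — and the only genuinely non-routine point — is handling the anisotropic radiation condition correctly: unlike the isotropic Sommerfeld case, the ``radial'' direction governing decay is distorted by $\bfA$, so one must either work with the Liouville/phase change of variables that straightens the anisotropic eikonal (as done in~\cite{dass:kara:05}) or invoke their explicit expression for $\Phik$ and the resulting asymptotics of single-layer potentials. A secondary technical care point is the regularity of $u_B$: since $\bfA_B$ is merely $L^\infty$ with a jump across $\partial B$, $u_B$ is $H^1_{loc}$ but not $C^1$ across $\partial B$, so the integration by parts must be set up in the weak form where the natural (conormal) transmission conditions are built in; away from $\partial B$ and away from the source point the field is smooth by interior elliptic regularity, so the small-sphere limits around $\bfm,\bfs$ are justified. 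Assembling these pieces, the identity reduces to $u_B(\bfm;\bfs)-u_B(\bfs;\bfm)=0$, which is the claim.
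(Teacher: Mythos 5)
Your proposal is correct and follows essentially the same route as the paper: Green's identity on a large ball with small balls excised around the source points, symmetry of $\bfA_B$ to kill the volume terms, extraction of $u_B(\bfm;\bfs)$ and $u_B(\bfs;\bfm)$ from the local singularity $\Phik(\cdot-\bfm)$, $\Phik(\cdot-\bfs)$, and the anisotropic radiation condition~\eqref{rad:cond} to make the outer boundary integral vanish as $R\to\infty$. The only cosmetic difference is that you excise both source points at once and antisymmetrize a single Dirichlet form, whereas the paper writes two separate Green identities (one per source point) and subtracts them; your added remarks on the conormal transmission conditions across $\partial B$ and on reducing the anisotropic radiation estimate to the isotropic one are welcome but not a different argument.
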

\begin{proof}
Let $\OO_R$ denote the ball of radius $R$, with $R$ large enough to have $\bfm,\bfs\shin\OO_R$, and set $\OO_{R,\eps}(\bfs):=\lcb \bfx\shin\OO_R, |\bfx\shm\bfs|>\eps \rcb$ with $\eps<|\bfm\shm\bfs|$. We have
\[
  -\int_{\OO_{R,\eps}(\bfs)} \lsqb \Div\lpar\bfA_B\sip\bfna u_B(\cdot;\bfs) \rpar + \kappa^2 u_B(\cdot;\bfs) \rsqb u_B(\cdot;\bfm) \dV = 0
\]
and the above integral is well-defined since $u_B(\cdot;\bfs)$ is smooth, and $u_B(\cdot;\bfm)$ summable, in $\OO_{R,\eps}(\bfs)$. Applying the first Green identity to the above identity and taking the limit $\eps\to0$ in the resulting equality (using that $u_B(\cdot;\bfs)=\Phi_{\kappa}(\cdot-\bfs)+u^s_B(\cdot;\bfs)$ together with the smoothness of $u^s_B(\cdot;\bfs)$ in a neighborhood of $\bfs$) yields
\begin{multline}
  \int_{\OO_{R,\eps}(\bfs)} \lsqb \bfna u_B(\cdot;\bfs) \sip\bfA_B\sip\bfna u_B(\cdot;\bfm) - \kappa^2 u_B(\cdot;\bfs) u_B(\cdot;\bfm) \rsqb \dV \\[-1ex]
 = u_B(\bfs;\bfm) + \int_{\partial\OO_R} \lpar \bfn\sip\bfA\sip\bfna u_B(\cdot;\bfs) \rpar u_B(\cdot;\bfm) \dS.
\end{multline}
The above equality also holds with the roles of $\bfm$ and $\bfs$ reversed. Subtracting these two equalities provides
\begin{multline}
  0 = u_B(\bfs;\bfm) - u_B(\bfm;\bfs) \\[-0.5ex]
  + \int_{\partial\OO_R} \lcb \lpar \bfn\sip\bfA\sip\bfna u_B(\cdot;\bfs) \rpar u_B(\cdot;\bfm) - \lpar \bfn\sip\bfA\sip\bfna u_B(\cdot;\bfm) \rpar u_B(\cdot;\bfs) \rcb \dS.
\end{multline}
The lemma finally follows from the fact that the above integral over $\partial\OO_R$ vanishes in the limit $R\to\infty$ due to the radiation condition~\eqref{rad:cond} satisfied by both $u_B(\cdot;\bfs)$ and $u_B(\cdot;\bfm)$.
\end{proof}

\noindent
Lemma~\ref{uB:reciprocity} implies that the measurement residuals (assuming noise-free data) verify
\[
  u_D(\bfm;\bfs)-u\obs(\bfm;\bfs) = u_D(\bfs;\bfm)-u_B(\bfs;\bfm), \qquad \bfm\shin\Gm,\,\bfs\shin\Gs
\]
Consequently, when $\Gm$ surrounds $\Gs$, the foregoing analysis leading to~\eqref{TD:exp} still applies by the simple expedient of reversing the roles of $\Gs$ and $\Gm$ in the cost functionals~\eqref{J:def} and~\eqref{Js:def} and setting $E:=S^{\star}_{sm}V^{-1}_{ss}$ for the symmetry-restoring operator $E$. Accordingly, the topological derivative is in this case given by
\begin{multline}
  \Tcal(\bfz)
 = -\Re \Lcb \iGm\iGm\iB \bfna \Phik(\bfm'\shm\bfz)\sip\bfM_z\sip\overline{\bfna \Phik(\bfz\shm\bfm)} \\
  \overline{\bfna \Phik(\bfm'\shm\bfy)}\sip\lsqb\bfM_B\bfna \Phik(\cdot\shm\bfm)\rsqb(\bfy) \ \dbfy \dbfm' \dbfm \Rcb, \label{TD:exp:reversed}
\end{multline}
Now we are ready to study the behavior of $ \Tcal(\bfz)$ for various locations of sampling point $z$. We will begin, in Section \ref{seciso1}, with the simpler case of isotropic media.

\subsection{Cases of partial aperture}

The foregoing development, which is undertaken assuming both surfaces $\Gs$ and $\Gm$ to be closed (and either nested or equal), can be extended to the cases where the outside surface is open, i.e. either $\Gs$ is open, $\Gm$ is closed and $\Gs\subset(\Rbb^3\shsetm\Rm)$ or $\Gm$ is open, $\Gs$ is closed and $\Gm\subset(\Rbb^3\shsetm\Rs)$. In the former case, Lemma~\ref{E:lemma} still holds true, with its proof unchanged except for the fact that the image space in identity $S^{\star}_{ms}V^{-1}_{mm}\overline{\bfH}{}^{\star}_{Bm} \sheq \bfH^{\star}_{Bs}$ is $H^{1/2}(\Gs)$, which requires that both members of the adjoint equality be evaluated on densities $\psi_s\shin \widetilde{H}{}^{-1/2}(\Gs)$. 
Hence the symmetry-restoring operator $E$ remains defined by $E:=S_{ms}^{\star}S^{-1}_{mm}$, and the resulting expression~\eqref{TD:exp} still holds. In the latter case, the reciprocity Lemma~\ref{uB:reciprocity} again allows reversion to the former case as explained in Section ~\ref{nested}.

\section{Isotropic scatterers}\label{seciso1} In this case, we have $\bfA=a\bfI$, $\bfAT=\tila\bfI$, $\bfA_z=a_z\bfI$, where $a$, $\tila$ and $a_z$ are strictly positive material constants. We introduce for convenience the non-dimensional material parameters
\begin{equation}\label{koti}
  \beta:=\frac{\tila}{a}\shm1, \quad  \beta_z:=\frac{a_z}{a}\shm1, \qquad
  q:=\frac{\beta}{\beta\shp2}, \quad q_z = \frac{\beta_z}{\beta_z\shp2},
\end{equation}
which verify $-1\shl\beta,\beta_z\shl\infty$ and $-1\shl q,q_z\shl1$. 
As we will see in the following, for isotropic scatterers the topological derivative expression is easier to analyze.
\subsection{Simplified expression of the topological derivative}

The singular integral operator $\bfT$ introduced in~\eqref{h:VIE} is then given by
\[
  \bfT = \bfI - a\beta\bfna\bfW_{\kappa} = \frac{\beta}{2q}(\bfI-q\bfR_{\kappa}), \qquad \text{with}\quad \bfR_{\kappa}:=\bfI\shp 2a\bfna\bfW_{\kappa}
\]
(with the second equality easily checked by inspection). The solution operator $\bfM_B$ introduced in~\eqref{exp:uB} is then given by
\[
  \bfM_B = 2aq(\bfI-q\bfR_{\kappa})^{-1}.
\]
Moreover, the polarization tensor, being defined from the zero-frequency transmission problem where $\Bcal$ is excited by a remote constant gradient, is given by
\[
  \bfM_z\sip\bfg = 2aq_z \iBcal \lpar \bfI - q_z\bfR_0 \rpar^{-1}\bfg \dV \qquad \text{for any $\bfg\shin\Cbb^3$}
\]
with $\bfR_0:=\bfI\shp 2a\bfna\bfW_0$, and where the volume potential $\bfW_0$ is defined as in~\eqref{W:def} except that $\Phik$ is replaced with the \emph{zero-frequency} fundamental solution $\Phiz$, given by $\Phiz(\bfr) = 1/(4\pi a|\bfr|)$. Since $\|q_z\bfR_0\|<1$ for any $q_z>-1$ ~\cite{B-2016-06} and $\bfR_0$ defines a real symmetric $L^2(\Bcal;\Rbb^3)\to L^2(\Bcal;\Rbb^3)$ operator, the operator $\bfI - q_z\bfR_0$ is symmetric and positive definite, implying that the polarization tensor can be recast  in the form
\[
  \bfM_z = 2aq_z \bfD_z\Tsup\sip\bfD_z
\]
(with $\bfD_z$ the real-valued Choleski square root of the real symmetric positive definite matrix $(2aq_z)^{-1}\bfM_z$). If the trial inhomogeneity is spherical (i.e. if $\Bcal$ is the unit ball), we have
\begin{equation}
  \bfM_z = \frac{4\pi a\beta_z}{\beta_z\shp 3}\bfI = \frac{8\pi aq_z}{3\shm q_z}\bfI, \quad\text{i.e.}\quad
  \bfD_z = \sqrt{\frac{4\pi}{3\shm q_z}}\bfI. \label{Mz:iso:ball}
\end{equation}

\noindent
We now take advantage of the above representation of $\bfM_z$ in the expression~\eqref{TD:exp} of $\Tcal(\bfz)$, which becomes
\begin{equation}
  \Tcal(\bfz)
 = -4a^2 qq_z\, \Re\, \Lcb \lpar \bfK,\RS\bfK \rpar_{L^2(B;\Cbb^{3\times 3})} \Rcb \label{TD:exp:modif}
\end{equation}
with $\bfK(\bfz,\bfy):=\bfD_z\Tsup\sip\bfG(\bfz,\bfy)$ and
\begin{equation}
  \Rcal_{ijk\ell} := \delta_{ik}\lpar \bfI-q\bfR_{\kappa} \rpar^{-1}_{j\ell}. \label{M:def}
\end{equation}
As a result of ~\eqref{TD:exp:modif} and~\eqref{M:def}, we can deduce that, under an assumption on the strength of the scatterer, the sign heuristic underpinning topological derivative-based identification is true. More specifically, with the stated notations and assumptions on the scattering by isotropic media with contrast in the main operator (as opposed to \cite{B-2013-01} where the contrast is only in the lower order term), we have proven the following theorem.
\begin{theorem}\label{th-iso}
For any true isotropic scatterer $(B,\beta)$, where $\beta$ is  defined by (\ref{koti}), and wave number $\kappa$ that satisfy
\begin{equation}
  \|q\bfR_{\kappa}\| = |q|\,\|\bfR_{\kappa}\|<1, \label{norm:iso}
\end{equation}
the topological derivative satisfies the sign condition
\begin{equation}
  \text{\em sign}(\Tcal(\bfz)) = -\text{\em sign}(qq_z), \label{sign:iso}
\end{equation}
where $q$ and $q_z$ are given by (\ref{koti}).
\end{theorem}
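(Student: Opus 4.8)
The plan is to extract the sign of $\Tcal(\bfz)$ directly from the factorized formula~\eqref{TD:exp:modif}: since the prefactor there is $-4a^2qq_z$ with $a^2>0$, establishing~\eqref{sign:iso} reduces to showing that the real number $\Re\bigl\{(\bfK,\RS\bfK)_{L^2(B;\Cbb^{3\times 3})}\bigr\}$ is strictly positive whenever~\eqref{norm:iso} holds. The first step is to exploit the block structure of $\RS$ in~\eqref{M:def}: because $\Rcal_{ijk\ell}=\delta_{ik}(\bfI-q\bfR_{\kappa})^{-1}_{j\ell}$, the first tensor index is a mere spectator, so $\RS$ is nothing but three decoupled copies of $(\bfI-q\bfR_{\kappa})^{-1}$ acting on the rows of $\bfK$. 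Writing $\boldsymbol{k}_i(\bfy):=\bigl(K_{i1}(\bfy),K_{i2}(\bfy),K_{i3}(\bfy)\bigr)\in\Cbb^3$ for the $i$-th row of $\bfK(\bfz,\bfy)$, this gives
\[
  \bigl(\bfK,\RS\bfK\bigr)_{L^2(B;\Cbb^{3\times 3})}=\sum_{i=1}^{3}\bigl(\boldsymbol{k}_i,(\bfI-q\bfR_{\kappa})^{-1}\boldsymbol{k}_i\bigr)_{L^2(B;\Cbb^3)}.
\]

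The core of the argument is a coercivity estimate for $(\bfI-q\bfR_{\kappa})^{-1}$, which is where hypothesis~\eqref{norm:iso} enters. Given $\boldsymbol{v}\in L^2(B;\Cbb^3)$, set $\boldsymbol{w}:=(\bfI-q\bfR_{\kappa})^{-1}\boldsymbol{v}$, so that $\boldsymbol{v}=(\bfI-q\bfR_{\kappa})\boldsymbol{w}$; using that $q$ is real together with the Cauchy--Schwarz inequality for $(\bfR_{\kappa}\boldsymbol{w},\boldsymbol{w})$ yields
\[
  \Re\bigl(\boldsymbol{v},\boldsymbol{w}\bigr)=\|\boldsymbol{w}\|^{2}-q\,\Re\bigl(\bfR_{\kappa}\boldsymbol{w},\boldsymbol{w}\bigr)\ge\bigl(1-|q|\,\|\bfR_{\kappa}\|\bigr)\|\boldsymbol{w}\|^{2}.
\]
Under~\eqref{norm:iso} the factor $1-|q|\,\|\bfR_{\kappa}\|=1-\|q\bfR_{\kappa}\|$ is strictly positive, hence $\Re(\boldsymbol{v},\boldsymbol{w})>0$ whenever $\boldsymbol{v}\neq\mathbf{0}$. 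Summing over $i$, $\Re\{(\bfK,\RS\bfK)\}\ge(1-\|q\bfR_{\kappa}\|)\sum_{i}\|(\bfI-q\bfR_{\kappa})^{-1}\boldsymbol{k}_i\|^{2}\ge0$, with equality only if $\bfK(\bfz,\cdot)\equiv\mathbf{0}$ on $B$. It thus remains to rule out this degenerate possibility, and I expect this non-degeneracy step to be the real obstacle.

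To exclude $\bfK(\bfz,\cdot)\equiv\mathbf{0}$, note first that invertibility of $\bfD_z$ makes this equivalent to $\bfG(\bfz,\cdot)\equiv\mathbf{0}$ on $B$, i.e., by~\eqref{G=D2L}, to $\bfna_{\bfy}\bfna_{\bfz}L(\bfz,\bfy)=\mathbf{0}$ for $\bfy$ in an open subset $V\subset B$. For fixed $\bfz\notin\Gs$, the three functions $\bfy\mapsto\partial_{z_m}L(\bfz,\bfy)$ are, by~\eqref{L:def} and~\eqref{S:def}, single-layer potentials over $\Gs$ with smooth densities proportional to $\overline{\partial_m\Phik(\cdot-\bfz)}$, hence (being solutions of a constant-coefficient elliptic equation) real-analytic solutions of~\eqref{eq:propag} in $\Rs\supset\overline{B}$. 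Their gradients vanishing on $V$ forces each of them to be constant on $V$, therefore zero on $V$ since $\kappa\neq0$, therefore identically zero on $\Rs$ by analyticity; continuity of single-layer potentials across $\Gs$ together with uniqueness of the exterior radiating Dirichlet problem for~\eqref{eq:propag}--\eqref{rad:cond} then propagates this vanishing to $\Rbb^3\setminus\Gs$, whence the densities vanish on $\Gs$ by the jump relation for the normal trace of a single-layer potential. This would require $\bfna\Phik(\bfs-\bfz)=\mathbf{0}$ for every $\bfs\in\Gs$, which is impossible because in the isotropic case $\bfna\Phik$ has no zero off the origin (the radial derivative of $\Phik$ has no zero on $(0,\infty)$). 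Hence $\bfK(\bfz,\cdot)\not\equiv\mathbf{0}$, so $\Re\{(\bfK,\RS\bfK)\}>0$, and inserting this into~\eqref{TD:exp:modif} with $a^2>0$ gives $\mathrm{sign}(\Tcal(\bfz))=\mathrm{sign}(-qq_z)=-\mathrm{sign}(qq_z)$, which is~\eqref{sign:iso}.
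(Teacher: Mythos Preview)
Your proof is correct and follows the same route as the paper: both start from the factorized expression~\eqref{TD:exp:modif} and read off the sign from the prefactor $-4a^2qq_z$ once the positivity of $\Re\{(\bfK,\RS\bfK)\}$ is secured under~\eqref{norm:iso}. The paper in fact states Theorem~\ref{th-iso} simply ``as a result of~\eqref{TD:exp:modif} and~\eqref{M:def}'' and gives no further argument, so your write-up is strictly more detailed: you make the coercivity estimate $\Re\bigl((\bfI-q\bfR_{\kappa})\boldsymbol{w},\boldsymbol{w}\bigr)\ge(1-|q|\,\|\bfR_{\kappa}\|)\|\boldsymbol{w}\|^2$ explicit, and you supply a unique-continuation argument to rule out $\bfK(\bfz,\cdot)\equiv\mathbf{0}$, which the paper does not address at all (and which is needed if one wants the strict sign equality at every $\bfz$ rather than a non-strict inequality).
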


\noindent
Condition~\eqref{norm:iso} can be considered as restricting the justification of the sign heuristic to ``moderate'' scatterers (the moderate character depending on a combination of the scatterer size, its material contrast and the operating frequency).  We call  the scatterers that satisfies condition~\eqref{norm:iso} moderate, since it is less restrictive than the weak scattering condition implicit in the Born approximation (see Sec.~\ref{Born})

\noindent
As discussed in \cite{B-2013-01}, to use $z\mapsto \Tcal(\bfz)$ as an identifying function for the inhomogeneity, it should decay as $\bfz$ moves far away from the boundary of the unknown inhomogeneity in addition to verifying the sign heuristic property. But as opposed to \cite{B-2013-01}, here we deal with near field data and hence we need to understand how $\Tcal(\bfz)$ decays for $\bfz$ ``far" from the boundary of the inhomogeneity $B$ and still remaining within a ``reasonable" distance from the measurement curve $\Gamma_m$. To address this issue,  next we  carry out  this two-scale asymptotic calculations  for a spherical configuration of the measurement/source surface.

\subsection{Decay properties of the topological derivative}

Here we limit ourselves to the case when the trial inhomogeneity is spherical (i.e. if $\Bcal$ is the unit ball) and when the excitations and measurements surfaces $\Gamma_s=\Gamma_m=R\hat S$ are  both the sphere of radius $R$ centered at the origin. For the purpose of these calculations, we assume without loss of generality that $a=1$, hence $\Phik$ defined by~\eqref{fund:gov:eq} is now the free space fundamental solution of the Helmholz equation given by
\begin{equation}\label{fund} 
\Phik(\bfs\shm\bfy):=\frac{1}{4\pi}\frac{e^{\rmi\kappa|\bfs\shm\bfy|}}{|\bfs\shm\bfy|}.
\end{equation}
 In this particular setting, as noted above, the topological derivative becomes
\begin{equation}
  \Tcal(\bfz)
 = -\frac{16\pi qq_z}{3-q_z}\, \Re\, \int_B \bfG(\bfz,\bfy) : \left[\RS \bfG\right](\bfz,\bfy) \dbfy \label{TD:exp:modif2}
\end{equation}
where 
$$\Rcal_{ijk\ell} := \delta_{ik}\lpar \bfI-q\bfR_{\kappa} \rpar^{-1}_{j\ell}, \;  \bfR_{\kappa}:=\bfI\shp 2a\bfna\bfW_{\kappa}, \; \bfW_{\kappa}[\bfg](\bfx) = \int_{B} \bfna \Phik(\bfx\shm\bfy)\sip\bfg(\bfy) \dbfy$$
and the $3\times 3$ tensor valued function $\bfG(\bfz,\bfx)$ is given by\enlargethispage*{1ex}
\begin{multline}
  \bfG(\bfz,\bfy)=\int_{R\hat S}\bfna_z\overline{\Phik(\bfs\shm\bfz)} \otimes \bfna_y \Phik(\bfs\shm\bfy) \dbfs \\
 = \frac{1}{16\pi^2}\int_{\hat S}\frac{(1+\rmi\kappa|\bfs\shm\bfz|)}{|\bfs\shm\bfz|^2}\frac{(1-\rmi\kappa|\bfs\shm\bfy|)}{|\bfs\shm\bfy|^2}e^{-\rmi\kappa|\bfs\shm\bfz|}e^{\rmi\kappa|\bfs\shm\bfy|}(\widehat{\bfs\shm\bfz}\otimes \widehat{\bfs\shm\bfy})\, R^2 d \hat \bfs. \label{eqg}
\end{multline}
\begin{figure}[b]
\begin{center}
\includegraphics[width=0.40\textwidth]{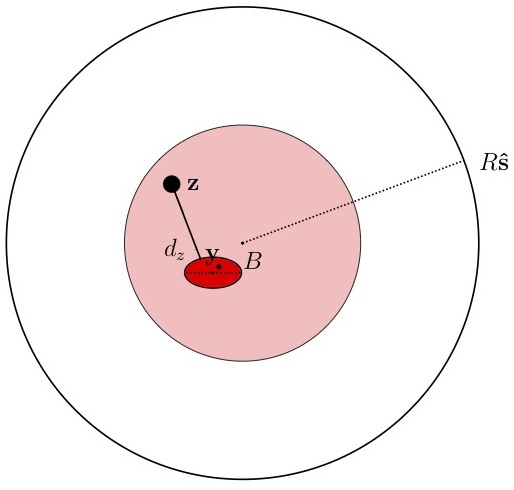} 
\end{center}
\caption{A sketch of the probing region. The thick line, i.e. $d_z$, indicates the reference length scale, which is much smaller than $R$, more precisely $d_z/R=\eta^\alpha$, but much bigger then diam$(B)$,  more precisely diam$(B)/d_z=\eta^{1-\alpha}$. Here  $\eta=\mbox{diam}(B)/R$.}
\label{Fig1}
\end{figure}
We want to study the decaying behavior of   $\Tcal(\bfz)$ for $\bfz$ far away from the target inhomogeneity $B$. In the far field it was shown in \cite{B-2013-01} (for a slightly different problem) that the topological derivative decays at reversed proportional to the square of the distance  of $\bfz$ for $B$. However, here we expect that such behavior depends on  how far the probing region is from the source/measurement surface. To better understand this interplay, for a fixed $\bfz$ outside $B$ we set the reference length to be $d_{\bfz}:=\mbox{dist}(\bfz, B)$  and note that $|\bfy\shm\bfz|$, $\bfy\in B$ is $O(d_{\bfz})$.  Let $\eta>0$ be a small parameter and take a constant $0<\alpha<1$. Here $\eta$ characterizes the ratio between the size of $B$ and the radius $R$ of the measurement/source sphere (Figure~\ref{Fig1}). Thus
\begin{equation}\label{a0}
\frac{|\bfy|}{|\bfs|}=\frac{|\bfy|}{R}=O(\eta), \qquad \bfy\in B,\; \bfs \in R\hat S.
\end{equation}
We express the facts that  the ``region of action" (i.e. the probing region and inhomogeneity) is far from the source/measurement surface, and that  $\bfz$ stays ``far from" the inhomogeneity, by assuming that 
\begin{equation}
\frac{|\bfy\shm\bfz|}{|\bfs|}=\frac{|\bfy\shm\bfz|}{R}=O(\eta^{\alpha}) \qquad \mbox{and} \qquad \frac{|\bfy|}{|\bfy\shm\bfz|}=O(\eta^{1-\alpha}), \label{a1}
\end{equation} 
respectively, uniformly for ${\bfy}\in B$.  Loosely speaking our scaling is such that $d_{\bfz}/R=\eta^\alpha$ and $\mbox{diam}(B)/d_{\bfz}=\eta^{1-\alpha}$ (see Figure~\ref{Fig1}). We now perform ``far field"  asymptotic expansions for functions involved in (\ref{eqg}) as $\eta\to 0$, retaining only the terms of order $O(1)$ and $O(\eta^\alpha)$ (note that the terms $O(1)$ are those that appear in the far field expansion \cite{B-2016-06}). To this end, making  use of the following simple formula
\begin{equation}
  |\bfs\shm\bfz|-|\bfs\shm\bfy|=\frac{|\bfs\shm\bfz|^2-|\bfs\shm\bfy|^2}{|\bfs\shm\bfz|+|\bfs\shm\bfy|}=|\bfy\shm\bfz|\frac{|\bfy\shm\bfz|+2(\bfs\shm\bfy)\cdot \widehat{(\bfy\shm\bfz)}}{|\bfs\shm\bfz|+|\bfs\shm\bfy|},
\end{equation}
letting
\begin{equation}
c  :=\hat \bfs \cdot \widehat{(\bfy\shm\bfz)},
\end{equation}
and noting that
\begin{equation}
  \frac{\bfs\shm\bfy}{R}=\hat \bfs(1+O(\eta))=\bfsh(1+o(\eta^\alpha)),
\end{equation}
we arrive at the following asymptotic expressions
\begin{equation}
  \displaystyle{|\bfs\shm\bfy|^2=R^2+O(\eta)=R^2+o(\eta^\alpha)}, \qquad  \displaystyle{|\bfs\shm\bfz|^2=R^2\left[1+2c\frac{|\bfy\shm\bfz|}{R}+o(\eta^\alpha)\right]}
\end{equation}
which yield
\begin{align}
  \frac{1-\rmi\kappa|\bfs\shm\bfy|}{|\bfs\shm\bfy|^2}
 &= \frac{1-\rmi\kappa R}{R^2}\left(1+o(\eta^\alpha)\right) \label{a2} \\
\frac{1+\rmi\kappa|\bfs\shm\bfz|}{|\bfs\shm\bfz|^2}
 &=\frac{1}{R^2}\left(1+\rmi\kappa R-(2+\rmi\kappa R)c\frac{|\bfy\shm\bfz|}{R}\right)+o(\eta^\alpha). \label{a3}
\end{align}
Next, we have
\begin{align}
  |\bfy\shm\bfz|+2(\bfs\shm\bfy)\cdot \widehat{(\bfy\shm\bfz)}
 &= R\left[\frac{|\bfy\shm\bfz|}{R}+2c+o(\eta^\alpha)\right] \\
  |\bfs\shm\bfz|+|\bfs\shm\bfy|
 &= R\left[2+c\frac{|\bfy\shm\bfz|}{R}+o(\eta^\alpha)\right]
\end{align}
which from the above yields the following expansion for the exponents in the exponential terms in (\ref{eqg})
$$-\rmi\kappa\left(|\bfs\shm\bfz|-|\bfs\shm\bfy|\right)=-\rmi\kappa|\bfy\shm\bfz|c\left[1+\frac{1-c^2}{2c}\frac{|\bfy\shm\bfz|}{R}+o(\eta^\alpha)\right].$$
Hence, we obtain the following expression for the exponential term
\begin{equation}\label{a4}
e^{-\rmi\kappa\left(|\bfs\shm\bfz|-|\bfs\shm\bfy|\right)}=e^{-\rmi\kappa c |\bfy\shm\bfz|}\left[1-\rmi\kappa\frac{|\bfy\shm\bfz|^2}{R}\frac{1-c^2}{2}+o(\eta^\alpha)\right]
\end{equation}
Now plugging (\ref{a2}), (\ref{a3}) and (\ref{a4}) in (\ref{eqg}), using
\begin{equation}
  \widehat{\bfs\shm\bfz}\otimes\widehat{\bfs\shm\bfy}=\hat \bfs\otimes \hat \bfs+(\hat \bfs\otimes \hat \bfz-c\bfsh\otimes\bfsh)\frac{|\bfy\shm\bfz|}{R}+o(\eta^\alpha)
\end{equation}
and collecting  the coefficients in front of $O(1)$ and $O(\eta^\alpha)$  terms, we finally obtain  (recall that $|\bfy\shm\bfz|/R=O(\eta^\alpha)$, see~\eqref{a1}),
\begin{equation}\label{g1}
  \bfG(\bfz,\bfy)
 = \inv{16\pi^2}\int_{\hat S} \Lcb A(\bfz,\bfy)\bfsh\otimes\bfsh
  + \frac{|\bfy\shm\bfz|}{R} \lsqb A(\bfz,\bfy)\bfsh\otimes\bfzh
  + B(\bfz,\bfy) \bfsh\otimes \bfsh \rsqb \Rcb d{\bfs}+o(\eta^\alpha)
\end{equation}
 with
\begin{align}
  A(\bfz,\bfy)
 &=\frac{1+\rmi\kappa R}{R}\, \frac{1-\rmi\kappa R}{R}e^{-\rmi\kappa c|\bfz\shm\bfy|} \\
  B(\bfz,\bfy)
 &= \frac{1-\rmi\kappa R}{R}\left[\frac{1+\rmi\kappa R}{R}\left(-\rmi\kappa\frac{1-c^2}{2}|\bfy\shm\bfz|-c\right)-\frac{2+\rmi\kappa R}{R}c\right]
\end{align}
where we recall again that $c:=\hat \bfs \cdot \widehat{(\bfy\shm\bfz)}$.  The integration over the unit sphere $\hat S$ after parametrizing it as $x=\sqrt{1-c^2}\cos\varphi$, $y=\sqrt{1-c^2}\sin\varphi$, $z=c$ with  $c\in[-1,1]$ and $\varphi\in [0, 2\pi]$  involves  integrals of the form
\begin{equation}
  I_k:=\int_{-1}^1 e^{\rmi\kappa|\bfy\shm\bfz| c}c^m\,dc\qquad 0\leq m\leq 4
\end{equation}
which from the Jacobi-Anger expansion can be written as linear combinations of
\begin{equation}
  2(-i)^nj_n(\kappa|\bfy\shm\bfz|)
 :=\int_1^1e^{\rmi\kappa|\bfy\shm\bfz| c}P_n(c)dc, \qquad 0\leq n\leq 4
\end{equation}
where $j_n$ are spherical Bessel function of order $n$ and $P_n$ are the Lagrange polynomials. By straightforward but careful calculations, we arrive at the following  expression after using the classical identity $(j_{n-1}+j_{n+1})(t)=(2n\shp 1)j_n(t)/t$ (see e.g \cite[equation (2.34)]{col:83})
\begin{multline}
  \bfG(\bfz,\bfy)
 = \frac{1+\kappa^2 R^2}{12\pi R^2} \Lsqb j_0(\kappa|\bfy\shm\bfz|)\bfI
  + j_2(\kappa|\bfy\shm\bfz|) (\bfI-3\bfbh\otimes\bfbh) \Rsqb \\ \hspace*{-1em}
  - \frac{\kappa R\shp\rmi}{4\pi R^2}
  \Lsqb j_1(\kappa|\bfy\shm\bfz|)\bfbh\otimes\bfbh
  + (\rmi\kappa R+2)\frac{j_2(\kappa|\bfy\shm\bfz|)}{\kappa|\bfy\shm\bfz|}(\bfI-3\bfhb\otimes\bfbh) \Rsqb\frac{|\bfy\shm\bfz|}{R} + o(\eta^{\alpha}),\hspace*{-1em}
\end{multline}
wherein $\bfbh:=\widehat{\bfy\shm\bfz}$. Notice that, for fixed $R$ large enough with respect to the inhomogeneity $B$, i.e. respecting (\ref{a0}) that ensures our asymptotic works, we see from the behavior of Bessel functions $j_n(x)=O(|x|^{-1})$ for $x\to \infty$, that  $\bfG(\bfz,\bfy)=O(|\bfy\shm\bfz|^{-1})$ for $\bfy\in B$ and $\bfz$ far from $B$ (but still respecting (\ref{a1})). Plugging $\bfG(\bfz,\bfy)$ in~\eqref{TD:exp:modif2} and using the fact that $\|\RS\|<C$ in the operator norm, by invoking the Cauchy-Schwarz inequality, we can assert that the topological derivative  $\Tcal(\bfz)$ decays as $O(d^{-2}_{\bfz})$ for large enough $d_{\bfz}:=\mbox{dist}(\bfz, B)$ (i.e. at the same rate as in the case of the far-field for the problem considered in \cite{B-2013-01}). We summarize our result in the following theorem.
\begin{theorem}\label{theo}
For a given unknown isotropic inhomogeneity  $(B,\beta)$, where $\beta$ is  defined by (\ref{koti}), we assume that  the excitations and measurements surfaces $\Gamma_s=\Gamma_m=R\hat S$ are  both the sphere of radius $R$ centered at the origin. Furthermore,  suppose that  $\mbox{\em dist}(\bfz, B)/R=\eta^\alpha$ and $\mbox{\em diam}(B)/\mbox{\em dist}(\bfz, B)=\eta^{1-\alpha}$ for  some small dimensionless parameter $\eta>0$ and $0<\alpha<1$. Then 
$$\Tcal(\bfz)=O\left(\frac{1}{(\mbox{\em dist}(\bfz, B))^{2}}\right) \qquad \qquad \mbox{as \;\; $\mbox{ \em dist}(\bfz, B)\to \infty$}.$$
\end{theorem}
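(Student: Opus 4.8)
The plan is to combine the asymptotic expansion of $\bfG(\bfz,\bfy)$ obtained just above with an elementary energy estimate on the quadratic form~\eqref{TD:exp:modif2}: essentially all the analytic work is already contained in that expansion, and it remains only to assemble the pieces. First I would record that the operator $\RS$, whose components read $\Rcal_{ijk\ell}=\delta_{ik}(\bfI-q\bfR_{\kappa})^{-1}_{j\ell}$, is bounded on $L^2(B;\Cbb^{3\times3})$, with a norm $C=C(B,\beta,\kappa)$ that, being built solely from the scatterer, the material contrast and the frequency, is independent of the sampling point $\bfz$. Indeed $(\bfI-q\bfR_{\kappa})^{-1}=\tfrac{\beta}{2q}\bfT^{-1}$, and $\bfT$ is invertible with bounded inverse on $L^2(B;\Cbb^3)$ as recalled after~\eqref{h:VIE}; in particular this uses nothing like the ``moderate scatterer'' condition~\eqref{norm:iso}. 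Applying the Cauchy--Schwarz inequality in $L^2(B;\Cbb^{3\times3})$ to~\eqref{TD:exp:modif2} then yields
\[
  |\Tcal(\bfz)|\ \leq\ \frac{16\pi|qq_z|}{3-q_z}\,\big\|\bfG(\bfz,\cdot)\big\|_{L^2(B)}\,\big\|\RS\bfG(\bfz,\cdot)\big\|_{L^2(B)}\ \leq\ \frac{16\pi|qq_z|\,C}{3-q_z}\,\big\|\bfG(\bfz,\cdot)\big\|^2_{L^2(B)},
\]
so the theorem reduces to showing $\int_B|\bfG(\bfz,\bfy)|^2\dbfy=O(d_{\bfz}^{-2})$.

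For that I would invoke the two-scale expansion of $\bfG(\bfz,\bfy)$ derived just above (the display in $j_0,j_1,j_2$). Using the decay $j_n(t)=O(t^{-1})$ as $t\to\infty$, together with the facts that $d_{\bfz}\leq|\bfy\shm\bfz|=O(d_{\bfz})$ and $R^{-1}|\bfy\shm\bfz|=O(\eta^{\alpha})\to0$ for $\bfy\in B$ under the scaling~\eqref{a1}, one checks term by term that the $O(1)$ part of the expansion is $O\big(\kappa^2R^2\cdot R^{-2}\cdot(\kappa d_{\bfz})^{-1}\big)=O(d_{\bfz}^{-1})$ and that the first-order $R^{-1}|\bfy\shm\bfz|$ correction --- as well as the $o(\eta^{\alpha})$ remainder --- is of strictly smaller order, since $R\to\infty$ as $d_{\bfz}\to\infty$ under this scaling. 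Hence $|\bfG(\bfz,\bfy)|\leq C'd_{\bfz}^{-1}$ uniformly in $\bfy\in B$ with $C'$ independent of $d_{\bfz}$, so $\int_B|\bfG(\bfz,\bfy)|^2\dbfy\leq|B|\,(C')^2d_{\bfz}^{-2}$, and the displayed bound gives $|\Tcal(\bfz)|=O(d_{\bfz}^{-2})$ as asserted.

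The delicate point --- the one I would treat most carefully --- is the bookkeeping in the two-scale asymptotics underlying the bound $\bfG(\bfz,\bfy)=O(|\bfy\shm\bfz|^{-1})$: one must confirm that no implied constant secretly depends on $d_{\bfz}$ through the growing radius $R$, and in particular that the first-order correction, whose coefficient contains the $R$-growing factor $(\rmi\kappa R+2)$, is nonetheless controlled after being multiplied by the decaying factor $j_2(\kappa|\bfy\shm\bfz|)/(\kappa|\bfy\shm\bfz|)$ and the overall $R^{-2}|\bfy\shm\bfz|/R$ prefactor --- which is precisely why the expansion is pushed only to order $O(\eta^{\alpha})$ and the Bessel recurrence $(j_{n-1}+j_{n+1})(t)=(2n+1)j_n(t)/t$ is exploited. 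Once that uniform bound is in hand, the remaining steps --- boundedness of $\RS$, Cauchy--Schwarz, and integration over the fixed-size set $B$ --- are routine.
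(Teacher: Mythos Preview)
Your proposal is correct and follows essentially the same approach as the paper: bound $\RS$ in operator norm (independently of $\bfz$), apply Cauchy--Schwarz to~\eqref{TD:exp:modif2}, and read off $|\bfG(\bfz,\bfy)|=O(|\bfy\shm\bfz|^{-1})$ from the two-scale Bessel expansion of $\bfG$. One minor clarification: the reason the Bessel recursion $(j_{n-1}+j_{n+1})(t)=(2n+1)j_n(t)/t$ matters is not primarily the $(\rmi\kappa R+2)$ coefficient you singled out, but rather the term in $B(\bfz,\bfy)$ carrying the extra factor $|\bfy\shm\bfz|$, which a priori could spoil the uniform $O(|\bfy\shm\bfz|^{-1})$ bound; the recursion is what makes that contribution collapse into the displayed final form, after which every surviving term is manifestly $O(|\bfy\shm\bfz|^{-1})$ with $\bfz$-independent constants.
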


\noindent
Note that in the above calculations we consider  $\Gamma_s=\Gamma_m=R\hat S$  merely for  convenience. The same asymptotic behavior is valid for more general reasonable excitations and measurements surfaces, for example the boundary of a star shaped domain. 

\begin{remark}
{\em  The decaying property of the topological derivative  $\Tcal(\bfz)$ does not depend on the choice of $\alpha \in (0,1)$ which quantifies the fact that $R$ is much larger than the probing region.  Note that  in~\eqref{g1} the only term that could possibly affect the $\alpha$-independent decaying  of $\bfG(\bfz,\bfy)$ for large  $|\bfy\shm\bfz|$, is the term in $B(\bfz,\bfy)$ containing $|\bfy\shm\bfz|$. In our calculations we paid special attention to it; thanks to recursive formulas for Bessel functions, this term disappears. However, we think that the choice of  $0<\alpha<1$ may play a role  if the scattering problem is considered in a bounded region with prescribed boundary data, in which case the derivation of the topological derivative still holds true with $\Phik(\bfs\shm\bfy)$ in~\eqref{fund} replaced by the Green's function of the bounded region.}
\end{remark}

\begin{remark}[Zero-frequency limit] {\em When $\kappa\sheq0$, we have $\|q\bfR_0\|<1$ for any physically admissible $q$, so that~\eqref{sign:iso} holds for any configuration $B,q$. However, we now also have $|\bfK(\bfz,\bfy)|=O(1)$ (i.e. $\bfK(\bfz,\bfy)$ does not decay as the sampling point $\bfz$ is moved away from $B$), implying that the support of $B$ can no longer be (even roughly) estimated on the basis of the function $\bfz\mapsto\Tcal(\bfz)$.}
\end{remark}

\noindent
In \ref{seca2} it is shown that  $\bfG(\bfz,\bfy)$ is real-valued for the  configuration discussed here.  The above calculations  simplify in the case of the far field limit, i.e. $R\to \infty$, but nevertheless yielding exactly the same decaying property of the topological derivative, see \ref{seca1}

\section{Anisotropic scatterer}

The objective here  is to set up for the more general case of anisotropic media a formula for $\Tcal(\bfz)$ that has the same general structure as~\eqref{TD:exp:modif}, and then use it for deducing results on the sign of the topological derivative.  To recast  the topological derivative  in a form allowing to understand its sign, we need a reformulation of the solution operator. To this end, we recall that in~\cite{B-2016-06}, the solution operator $\bfM_B$ is found to have the representation
\[
  \bfM_B = 2\bfA^{1/2}\sip\lpar \bfI\shm\bfQ\sip\bfR_{\kappa} \rpar^{-1}\sip\bfQ\sip\bfA^{1/2}
\]
where $\bfA^{1/2}$ is the positive square root of the positive definite constitutive matrix $\bfA$, the multiplication operator $\bfQ$ is defined by the matrix
\begin{equation}\label{defb}
  \bfQ = (\bfbe\shp2\bfI)^{-1}\bfbe, \qquad \bfbe := \bfA^{-1/2}\sip(\bfAT\shm\bfA)\sip\bfA^{-1/2}
\end{equation}
in terms of the above-defined anisotropic relative contrast $\bfbe$, and the operator $\bfR_{\kappa}$, which depends only on the background medium, is defined by
\begin{equation}
  \bfR_{\kappa} = \bfI+2\bfA^{1/2}\sip\bfna\bfW_{\kappa}\sip\bfA^{1/2}
\end{equation}
Moreover, there exists a matrix $\bfq\in\Rbb^{3\times3}$ and a diagonal matrix $\bfsig$ such that  $\bfQ$ can be factorized as 
\begin{equation}\label{qfac}
\bfQ=\bfq \Tsup \sip \bfsig^2 \sip \bfq,
\end{equation}
 with the nonzero entries of $\bfsig^2$ (also diagonal) being $\pm1$ according to the sign of the corresponding eigenvalue of $\bfQ$. Using this decomposition of $\bfQ$ in $\bfM_B$, we can show that
\begin{equation}
  \bfM_B = 2(\bfA^{1/2}\sip\bfq\Tsup)\sip \bfsig\sip\lpar \bfI\shm\bfsig\sip\bfq\sip\bfR_{\kappa}\sip\bfq\Tsup\sip\bfsig \rpar^{-1}  \sip\bfsig\sip(\bfq\sip\bfA^{1/2}) \label{MB:sym}
\end{equation}
We next consider two different cases.

\subsection{Isotropic background and trial materials, spherical trial inhomogeneity}

Consider the special (and practically useful) case where an anisotropic inhomogeneity embedded in an isotropic background medium is to be identified on the basis of the topological derivative defined in terms of a spherical trial inhomogeneity whose constitutive material is isotropic. In this case, formula~\eqref{TD:exp} for $\Tcal(\bfz)$ can be written with $\bfM_z$ given by~\eqref{Mz:iso:ball} and $\bfM_B$ given by~\eqref{MB:sym} with $\bfA\sheq a\bfI$, i.e.
\[
  \bfM_B
  = 2a\bfq\Tsup\sip\bfsig\sip\lpar \bfI\shm\bfsig\sip\bfq\sip\bfR_{\kappa}\sip\bfq\Tsup\sip\bfsig \rpar^{-1}  \sip\bfsig\sip\bfq,
\]
and we find
\begin{equation}
\Tcal(\bfz) = -4a^2 q_z\, \Re\, \Lcb \Lpar \bfG\sip\bfq\sip\overline{\bfsig},\RS\lsqb \bfG\sip\bfq\sip\bfsig \rsqb \Rpar_{L^2(B;\Cbb^{3\times 3})} \Rcb \label{TD:aniso:iso}
\end{equation}
with the $L^2(B;\Cbb^{3\times 3})\to L^2(B;\Cbb^{3\times 3})$ operator $\RS$ this time defined by

$$ \Rcal_{ijk\ell} = \delta_{ik}\lpar \bfI\shm\bfsig\sip\bfq\sip\bfR_{\kappa}\sip\bfq\Tsup\sip\bfsig \rpar^{-1}_{j\ell}.$$
 
 \noindent 
We are ready to obtain the resulting properties of the topological derivative. Indeed, if the true anisotropic  refractive index  contrast has a sign (i.e. if $\bfsig\sheq\sigma\bfI$ with $\sigma\sheq1$ or $\sigma\sheq\mathrm{i}$), 
\eqref{TD:aniso:iso} becomes
\[
  \Tcal(\bfz)
 = -4a^2 q_z\sigma^2\, \Re\, \Lcb \lpar \bfK,\RS\bfK \rpar_{L^2(B;\Cbb^{3\times 3})} \Rcb
\]
where  $\bfK(\bfz,\bfy):=\bfG(\bfz,\bfy) \sip \bfq$. Consequently, the sign heuristic is true for any true scatterer $(B,\bfbe)$ and wave number $\kappa$ that satisfy
\begin{equation}
  \|\bfq\sip\bfR_{\kappa}\sip\bfq\| <1.\label{norm:iso:aniso}
\end{equation}
Hence we have the following result:
\begin{theorem}\label{th-iso2}
Given the true anisotropic scatterer $(B,\bfbe)$  with $\bfbe$ defined by (\ref{defb}), we assume that the background is isotropic $\bfA\sheq a\bfI$ and the contrast $\bfAT\shm\bfA$ has a definite sign, in the sense that in the factorization (\ref{qfac}) $\bfsig^2=\sigma^2 \bfI$ with $\sigma^2=\pm 1$. Then, if we consider a spherical isotropic  trial inhomogeneity (i.e. $\Bcal$ the unit ball and $\bfA_z\sheq a_z\bfI$)  and a wave number $\kappa$ such that 
\begin{equation}
   \|\bfq\sip\bfR_{\kappa}\sip\bfq\| <1, \label{norm:iso2}
\end{equation}
the topological derivative satisfies the following sign condition
\begin{equation}
  \text{\em sign}(\Tcal(\bfz)) = -\text{\em sign}(\sigma^2 q_z), \label{sign:iso2}
\end{equation}
where the trial contrast $q_z$ is defined by (\ref{koti}).
\end{theorem}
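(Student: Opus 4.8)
The plan is to read off $\text{sign}(\Tcal(\bfz))$ directly from the form of~\eqref{TD:aniso:iso} recorded just above the statement, namely
\[
  \Tcal(\bfz) = -4a^2 q_z\sigma^2\, \Re\, \Lcb \lpar \bfK,\RS\bfK \rpar_{L^2(B;\Cbb^{3\times 3})} \Rcb, \qquad \bfK(\bfz,\bfy):=\bfG(\bfz,\bfy)\sip\bfq,
\]
with $\Rcal_{ijk\ell}=\delta_{ik}\lpar \bfI-\bfsig\sip\bfq\sip\bfR_{\kappa}\sip\bfq\Tsup\sip\bfsig \rpar^{-1}_{j\ell}$ and $\sigma^2=\pm1$. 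Since the scalar prefactor $-4a^2q_z\sigma^2$ is real with sign $-\,\text{sign}(\sigma^2q_z)$, the whole statement reduces to proving that $\Re\,\lpar \bfK,\RS\bfK \rpar>0$ under hypothesis~\eqref{norm:iso2}, for every $\bfz$ such that $\bfK(\bfz,\cdot)\not\equiv\mathbf{0}$ on $B$.

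First I would exploit the $\delta_{ik}$ factor in $\RS$: with $\mathcal{P}:=\bfI-\bfsig\sip\bfq\sip\bfR_{\kappa}\sip\bfq\Tsup\sip\bfsig$ regarded as an operator on $L^2(B;\Cbb^3)$ and $\bfK^{(i)}$ ($i=1,2,3$) denoting the $\Cbb^3$-valued function obtained from $\bfK$ by freezing the first index, one has $(\RS\bfK)^{(i)}=\mathcal{P}^{-1}\bfK^{(i)}$, hence
\[
  \lpar \bfK,\RS\bfK \rpar_{L^2(B;\Cbb^{3\times 3})} = \sum_{i=1}^{3} \lpar \bfK^{(i)},\mathcal{P}^{-1}\bfK^{(i)} \rpar_{L^2(B;\Cbb^{3})},
\]
so it suffices to show $\mathcal{P}^{-1}$ is strictly accretive. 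Here the definite-sign hypothesis is convenient: in~\eqref{qfac} one may choose the symmetric square root $\bfq=(\sigma^2\bfQ)^{1/2}$, whence $\bfq\Tsup=\bfq$ and, using $\bfsig=\sigma\bfI$ with $|\sigma|=1$, $\|\mathcal{P}-\bfI\|=\|\bfsig\sip\bfq\sip\bfR_{\kappa}\sip\bfq\sip\bfsig\|=\|\bfq\sip\bfR_{\kappa}\sip\bfq\|$, which is exactly~\eqref{norm:iso2}; moreover this choice makes $\bfq$ invertible, since the definite-sign assumption forces $\bfbe$, hence $\bfQ$, to be invertible.

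It then remains to run the accretivity estimate. Since $\|\mathcal{P}-\bfI\|<1$, $\mathcal{P}$ is boundedly invertible, and for any $\boldsymbol{v}\in L^2(B;\Cbb^3)$, setting $\boldsymbol{w}:=\mathcal{P}^{-1}\boldsymbol{v}$ (so that $\boldsymbol{v}=\boldsymbol{w}+(\mathcal{P}-\bfI)\boldsymbol{w}$) and using the Cauchy--Schwarz inequality,
\[
  \Re\,\lpar \boldsymbol{v},\mathcal{P}^{-1}\boldsymbol{v} \rpar = \|\boldsymbol{w}\|^2 + \Re\,\lpar (\mathcal{P}-\bfI)\boldsymbol{w},\boldsymbol{w}\rpar \;\geq\; \bigl(1-\|\mathcal{P}-\bfI\|\bigr)\,\|\boldsymbol{w}\|^2 \;>\;0
\]
whenever $\boldsymbol{v}\neq\mathbf{0}$. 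Summing over $i$ yields $\Re\,\lpar\bfK,\RS\bfK\rpar>0$ as soon as some $\bfK^{(i)}\not\equiv\mathbf{0}$, i.e.\ as soon as $\bfK(\bfz,\cdot)=\bfG(\bfz,\cdot)\sip\bfq\not\equiv\mathbf{0}$ on $B$; since $\bfq$ is invertible this is equivalent to $\bfG(\bfz,\cdot)\not\equiv\mathbf{0}$ on $B$, which follows from a standard unique-continuation / jump-relation argument applied to the single-layer potentials underlying $\bfG$ in~\eqref{K:def} (the non-degeneracy already implicit in Theorem~\ref{th-iso}). Collecting the pieces gives $\text{sign}(\Tcal(\bfz))=-\,\text{sign}(\sigma^2q_z)$.

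I expect the accretivity estimate to be the main obstacle — i.e.\ checking that~\eqref{norm:iso2} really forces $\Re\lpar\boldsymbol{v},\mathcal{P}^{-1}\boldsymbol{v}\rpar>0$ and not merely that $\mathcal{P}$ is invertible — together with the bookkeeping in the purely imaginary case $\sigma=\mathrm{i}$: one has to verify that the factor $\sigma^2=-1$ is cleanly extracted in front of $\Re\{(\bfK,\RS\bfK)\}$ while $\mathcal{P}$ keeps the same norm (thanks to $|\sigma|=1$ and to the complex symmetry of $\bfR_{\kappa}$, which makes $\mathcal{P}$ complex-symmetric and removes any ambiguity about transposes in the reduction $(\RS\bfK)^{(i)}=\mathcal{P}^{-1}\bfK^{(i)}$). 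The remaining, more qualitative point is the non-degeneracy $\bfG(\bfz,\cdot)\not\equiv\mathbf{0}$ on $B$, needed to turn the inequality into a genuine sign.
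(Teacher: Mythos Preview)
Your proposal is correct and follows the same route as the paper: derive the formula $\Tcal(\bfz)=-4a^2q_z\sigma^2\,\Re\{(\bfK,\RS\bfK)\}$ with $\bfK=\bfG\sip\bfq$ by specializing~\eqref{TD:aniso:iso} to $\bfsig=\sigma\bfI$, and then read off the sign from the norm condition~\eqref{norm:iso2}. The paper's argument is very terse at this point (it simply asserts that the sign heuristic follows once~\eqref{norm:iso2} holds, in parallel with the isotropic Theorem~\ref{th-iso}); you have spelled out the steps the paper leaves implicit, namely the row-by-row reduction via the $\delta_{ik}$ factor in $\RS$ and the accretivity bound $\Re(\boldsymbol{v},\mathcal{P}^{-1}\boldsymbol{v})\geq(1-\|\mathcal{P}-\bfI\|)\|\mathcal{P}^{-1}\boldsymbol{v}\|^2$. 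Your additional care about the choice of the symmetric square root $\bfq$, the bookkeeping for $\sigma=\mathrm{i}$, and the non-degeneracy $\bfG(\bfz,\cdot)\not\equiv\mathbf{0}$ are all refinements not made explicit in the paper.
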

\noindent
Again here the assumption (\ref{norm:iso2}) can be considered as restricting the justification of the sign heuristic to moderately strong scatterers depending on a combination of the scatterer size, its material contrast and the operating frequency. The one-sign contrast type restriction is not unusual in the justification of  a variety of qualitative methods such as linear sampling and factorization methods.
\subsection{The general anisotropic case}
We now consider the more general case where $\bfA$ and $\bfA_z$ may be anisotropic and the trial inhomogeneity shape $\Bcal$ is arbitrary.
First we conveniently reformulate the polarization tensor. To this end, for the trial inhomogeneity $B_{\delta}$ and its normalized counterpart $\Bcal$, we likewise set
\begin{gather}
  \bfQ_z = (\bfbe_z\shp2\bfI)^{-1}\bfbe_z \qquad\text{with \ } \bfbe_z := \bfA^{-1/2}\sip(\bfA_z\shm\bfA)\sip\bfA^{-1/2}, \\
  \bfR_0 = \bfI+2\bfA^{1/2}\sip\bfna\bfW_0\sip\bfA^{1/2},
\end{gather}
with the zero-frequency fundamental solution $\Phiz$ entering the volume potential $\bfW_0$ now given by
\begin{equation}
  \Phiz(\bfr) = \inv{4\pi\sqrt{\text{det}(\bfA)}} \, \inv{|\bfA^{-1/2}\sip\bfr|}. \label{Phiz:exp}
\end{equation}
Using these definitions, we have
\begin{equation}
  \bfM_z\sip\bfg = \iBcal 2\bfA^{1/2}\sip\lpar \bfI\shm\bfQ_z\sip\bfR_0 \rpar^{-1}\sip\bfQ_z\sip\bfA^{1/2}\bfg \dV \label{M:exp}
\end{equation}
for any $\bfg\shin\Cbb^3$. Therefore, introducing the factorization $\bfQ_z=\bfq_z\Tsup\sip\bfsig_z^2\sip\bfq_z$ of $\bfQ_z$ as in~\eqref{qfac}, an identity similar to~\eqref{MB:sym} holds for $\bfM_z$:
\[
  \bfM_z\sip\bfg = \iBcal 2(\bfA^{1/2}\sip\bfq_z\Tsup)\sip \bfsig_z\sip\lpar \bfI\shm\bfsig_z\sip\bfq_z\sip\bfR_0\sip\bfq_z\Tsup\sip\bfsig_z \rpar^{-1}  \sip\bfsig_z\sip(\bfq_z\sip\bfA^{1/2})\cdot\bfg \dV. \label{Mz:sym}
\]
Besides, the $L^2(B;\Cbb^3)\to L^2(B;\Cbb^3)$ operators $\bfq_z$ and $\bfR_0$ are bounded and verify $\|\bfq_z\|\shl 1$ and $\|\bfR_0\|\sheq 1$~\cite{B-2016-06}. Consequently, the mapping 
$$\bfh\shin\Cbb^3\mapsto\iBcal \lpar \bfI\shm\bfsig_z\sip\bfq_z\sip\bfR_0\sip\bfq_z\Tsup\sip\bfsig_z \rpar^{-1}  \sip\bfh \dV\in\Cbb^3$$ defines a positive definite matrix (hence having a Choleski square root $\bfD_z$) and $\bfM_z$ can be recast as
\begin{equation}
  \bfM_z = 2(\bfA^{1/2}\sip\bfq_z\Tsup)\sip \bfsig_z\sip \bfD_z\Tsup\sip\bfD_z\sip \bfsig_z\sip(\bfq_z\sip\bfA^{1/2}). \label{Mz:sym}
\end{equation}
In this case, formula~\eqref{TD:exp} for $\Tcal(\bfz)$ can be written with $\bfM_z$ given by~\eqref{Mz:iso:ball} and $\bfM_B$ given by~\eqref{MB:sym}, to obtain
\begin{eqnarray}
 &&\hspace*{-2.5cm} \Tcal(\bfz) =\label{TD:aniso} \\
 &&\hspace*{-2.3cm} - \Re\, \Lcb \Lpar  \bfD_z\sip\overline{\bfsig}_z\sip\bfq_z\Tsup\sip\bfA^{1/2}\sip\bfG\sip\bfA^{1/2}\sip\bfq\sip\overline{\bfsig}\,, \,\RS\lsqb \bfD_z\sip\bfsig_z\sip\bfq_z\Tsup\sip\bfA^{1/2}\sip\bfG\sip\bfA^{1/2}\sip\bfq\sip\bfsig \rsqb \Rpar_{L^2(B;\Cbb^{3\times 3})} \Rcb \nonumber
\end{eqnarray}
with the $L^2(B;\Cbb^{3\times 3})\to L^2(B;\Cbb^{3\times 3})$ operator $\RS$ again defined as in~\eqref{TD:aniso:iso}.

\noindent
The above expression allows us to study  the sign  of the topological derivative in some special cases and obtain a result of the type as in Theorem \ref{th-iso2}. More specifically, if both the true and trial anisotropic conductivities have a sign (i.e. if $\bfsig\sheq\sigma\bfI$ and $\bfsig_z\sheq\sigma_z\bfI$ with $\sigma,\sigma_z\sheq1$ or $\mathrm{i}$), 
\eqref{TD:aniso} becomes
\[
  \Tcal(\bfz) = -\sigma^2\sigma_z^2\, \Re\, \Lcb \lpar \bfK,\RS\bfK \rpar_{L^2(B;\Cbb^{3\times 3})} \Rcb.
\]
with $\bfK(\bfz,\bfy):=\bfD_z\sip\bfq_z\Tsup\sip\bfA^{1/2}\sip\bfG(\bfz,\bfy)\sip\bfA^{1/2}\sip\bfq$. Consequently, the sign heuristic is valid  for any true scatterer $(B,\bfbe)$ and wave number $\kappa$ that satisfy
\begin{equation}
  \|\bfq\sip\bfR_{\kappa}\sip\bfq\| <1, \label{norm:iso:aniso}
\end{equation}

There are other cases when we can conclude the same sign property. For example, in the case where $\bfA_z=\bfA_B$, i.e. $\bfq_z=\bfq$ and $\bfsig_z=\bfsig$ does not seem to provide a clear result as to the sign of $\Tcal(\bfz)$. 

\subsection{Polarization tensor for an ellipsoidal trial inhomogeneity}

We show here that  the integral~\eqref{M:exp} can be evaluated in closed form if $\Bcal$ is an ellipsoid, to obtain
\begin{align}
  \bfM_z
 &= |\Bcal| \lpar \bfI + (\bfA_z\shm\bfA)\sip\bfS\sip\bfA^{-1} \rpar^{-1} \sip(\bfA_z\shm\bfA) \\
 &= 2|\Bcal|\bfA^{1/2}\sip\lpar \bfI\shm\bfQ_z + 2\bfQ_z\sip\bfA^{1/2}\sip\bfS\sip\bfA^{-1/2} \rpar^{-1}\sip\bfQ_z\sip\bfA^{1/2}
\end{align}
where $\bfS$ is the constant Eshelby-like tensor such that $\bfna\bfW_0[\bfg]=\shm\bfS\sip\bfA^{-1}\sip\bfg$ for any $\bfg\shin\Cbb^3$ (with this definition of $\bfS$ mirroring that usually made for elastic inhomogeneities). Moreover, we have $\bfS=(1/3)\bfI$ if $\Bcal$ is the unit ball.

\subsection{Moderate scatterer vs. Born approximation}
\label{Born}

We finish by briefly comparing the domain of validity of the TD heuristics (Theorems~\ref{th-iso} and~\ref{th-iso2}) to that of the Born approximation (BA). For the present physical model defined by~\eqref{eq:propag} and~\eqref{rad:cond}, the BA consists in writing $\bfh\approx(\bfAT\shm\bfA)\cdot\bfna u$, i.e. $u_B\approx u$, for the solution $\bfh$ of~\eqref{h:VIE}, inducing a $O(\|\bfT\shm\bfI\|)$ error on the representation~\eqref{exp:uB:W} of $u\ssup_B$. Now, since $\|\bfna\bfW_{\kappa}\|\geq C>0$ uniformly in $\kappa$ and $B$~\cite[Lemma~3]{B-2016-06}, the weak scatterer condition $\|\bfT\shm\bfI\|=o(1)$ implicit in the BA implies $\|\bfAT\shm\bfA\|=o(\|\bfA\|)$, i.e. $\|\bfbe\|=o(1)$, which in turn implies $\|\bfq\|=o(1)$ and then $\|\bfq\cdot\bfR_{\kappa}\cdot\bfq\|=o(1)$, a condition that is more restrictive than the moderate scatterer limitation $\|\bfq\cdot\bfR_{\kappa}\cdot\bfq\|<1$ of Theorem~\ref{th-iso2} or its isotropic counterpart. Similar conclusions were previously reached in~\cite{B-2013-01} for the case of far-field data and refraction index perturbations.

\section{Conclusion}

We derive an explicit expression of the topological derivative, ${\mathcal T}(\bfz)$, for the scattering by anisotropic media embedded in anisotropic background, with anisotropic trial inhomogeneity of arbitrary shape and near field measurements. Taking advantage of a recently-proposed reformulation of such volume integral equation ~\cite{B-2016-06}, we provide a symmetric factorization for $\Tcal(\bfxZ)$ where the middle operator contains  the material contrast. For the case of isotropic media and background, and isotropic trial inhomogeneity we rigorously prove the sign heuristic for  ${\mathcal T}(\bfz)$. For such configuration, in the particular case of spherical near field measurements far enough form the probing region, we show that $\Tcal(\bfz)=O\left(\frac{1}{(dist(\bfz, B))^{2}}\right)$ if  the location of trial inhomogeneity $\bfz$, is far enough from the unknown scatterer. In the  case of anisotropic media,  we are able to rigorously prove the sign heuristic  for ${\mathcal T}(\bfz)$ only  in some particular case under the general assumption of media with one-sign contrast. Although we are not able to deduce the sign heuristic for the topological derivative for all the combinations of general anisotropic configuration, we remark that our expressions provide a convenient form for  the analysis of the topological derivative, which can possibly be generalized to other type of scattering modalities.

\section*{Acknowledgments}
{ The research of F. Cakoni is partially supported  by the AFOSR Grant  FA9550-17-1-0147 and  NSF Grant DMS-1813492.}

\appendix 

\section{Explicit formulas for special cases}

We present here some explicit examples in the case where $\Gs=R\hatS$, $\hatS$ being the unit sphere, for which explicit analytical results can be derived. The background medium is assumed to be isotropic as described in Section \ref{seciso1}.
\subsection{Far field limit}\label{seca1}
In the  far-field limit when $R\to \infty$ and  for fixed $\kappa$, thanks to the the asymptotic expressions 
\[\begin{aligned}
  \Phik(\bfs\shm\bfz)
 &= \inv{4\pi}\frac{e^{\mathrm{i}\kappa R}}{R} \, e^{-\mathrm{i}\kappa\bfsh\cdot\bfz} + o(|\bfs|^{-1}), \\
  \bfna \Phik(\bfs\shm\bfz)
 &= \frac{\mathrm{i}\kappa}{4\pi}\, \frac{e^{\mathrm{i}\kappa R}}{R} \, e^{-\mathrm{i}\kappa\bfsh\cdot\bfz}\bfsh + o(|\bfs|^{-1}),
\end{aligned} \qquad |\bfs|\to\infty,
\]
we obtain
\[
  \bfG(\bfz,\bfy)
 = \frac{\kappa^2}{4\pi}\Lpar j_0(k|\bfy\shm\bfz|)\widehat{(\bfy\shm\bfz})\otimes\widehat{(\bfy\shm\bfz)} + \frac{j_1(k|\bfy\shm\bfz|)}{k|\bfy\shm\bfz|}[I-3\widehat{(\bfy\shm\bfz})\otimes\widehat{(\bfy\shm\bfz)}] \Rpar 
\]
up to order $o(|\bfs|^{-1})$. This, together with $\bfD$ being real-valued, implies that $\bfK(\bfz,\bfy):=\bfD_z\Tsup\sip\bfG(\bfz,\bfy)$  is real-valued in the far-field limit and that $|\bfK(\bfz,\bfy)|=O(|\bfz\shm\bfy|^{-1})$, yielding the decaying property of the topological derivative stated in Theorem \ref{theo}.

\subsection{Real-valuedness of $\bfG(\bfz,\bfy)$}\label{seca2}

Noting that $\Phik(\bfr)=(\mathrm{i}\kappa/4\pi)h_0^{(1)}(\kappa|\bfr|)$ and recalling a classical expansion of $h^{(1)}_0$ and the Legendre addition theorem, we have
\begin{align}
  \Phik(\bfs\shm\bfz)
 &= \frac{\mathrm{i}\kappa}{4\pi} \sum_{n=0}^{\infty} (2n\shp1) j_n(\kappa|\bfz|) \, h_n^{(1)}(\kappa|\bfs|) P_n(\bfsh\sip\bfzh)
  \qquad (|\bfz|,|\bfy| < |\bfs|=R), \\
 \text{with} \quad
  P_n(\bfsh\sip\bfzh) &= \frac{4\pi}{2n\shp1} \sum_{m=-n}^n Y_n^m(\bfsh)\overline{Y_n^m(\bfzh)}
\end{align}
(where $Y_n^m$ are $L^2(\hatS)$-orthonormal spherical harmonics). The two-point function $L$ can then be evaluated explicitly:
\begin{align}
  L(\bfz,\bfy)
 &= \kappa^2 \sum_{n=0}^{\infty} \sum_{n'=0}^{\infty}  \sum_{m=-n}^n \sum_{m'=-n'}^{n'} \Biggl\{
   \int_{|\bfs|=R} \, h_n^{(1)}(\kappa|\bfs|) \, \overline{h_{n'}^{(1)}(\kappa|\bfs|)} Y_n^m(\bfsh) Y_{n'}^{m'}(\bfsh) \dS(\bfsh)
   \Biggr\} \suite\hspace*{12em}
    j_n(\kappa|\bfz|) \, j_{n'}(\kappa|\bfy|) \, Y_{n'}^{m'}(\bfyh)\overline{Y_n^m(\bfzh)} \\
 &= \kappa^2 R^2 \sum_{n=0}^{\infty} \labs h_n^{(1)}(\kappa R) \rabs^2  \, j_n(\kappa|\bfz|) \, j_n(\kappa|\bfy|) \,
  \sum_{m=-n}^n Y_n^m(\bfyh)\overline{Y_n^m(\bfzh)} \\
 &= \frac{\kappa^2}{4\pi} \sum_{n=0}^{\infty} (2n\shp1) \labs h_n^{(1)}(\kappa R) \rabs^2  \, j_n(\kappa|\bfz|) \, j_n(\kappa|\bfy|) \, P_n(\bfzh\sip\bfyh).
\end{align}
The function $L$ is therefore real-valued, since the $j_n$ are, and~\eqref{G=D2L} implies that $\bfG$ is also real-valued (this observation is corroborated by numerical evaluations using high-accuracy numerical quadrature based on Lebedev points on $\hatS$).

If $\kappa=0$ (in which case $\Phiz$ and $\bfna \Phiz$ are of course real-valued), a similar derivation can be done with $h_n^{(1)}(\kappa R)$ replaced with $R^{-n-1}$ and $j_n$ by a homogeneous $n$-th degree harmonic polynomial (which in particular, unlike $j_n$, is not a decaying function of its argument).
\subsection{Symmetry-restoring operator $E$}\label{seca3}

If $\G$ is a sphere of radius $R$, the ``symmetry-restoring'' operator $E$ can be given an explicit expression. First, for given density $\varphi\in H^{-1/2}(\G)$, the single-layer potential $w:= S\varphi$ solves (see e.g.~\cite{col:83})
\[
  \Delta w + \kappa^2 w = 0 \ \ \text{in }\OO\shcup\lpar\Rbb^3\shsetm\overline{\OO}\rpar, \qquad
  \jump{w}=0 \ \text{and} \ \jump{\partial_n w}=-\varphi \ \ \text{on }\G
\]
When $\G$ is a sphere, the above problem can be solved by separation of variables. Expanding $\varphi$ and $\gamma w=S\varphi$ (where $\gamma$ is the Dirichlet trace operator on $\G$) according to
\[
  \varphi(\bfx) = \sum_{n=0}^{\infty} \sum_{m=-n}^{n} Y^m_n(\bfxh) \varphi^m_n, \quad
  \gamma w(\bfx) = \sum_{n=0}^{\infty} \sum_{m=-n}^{n} Y^m_n(\bfxh) w^m_n \qquad (\bfx\shin\G),
\]
we find
\[
  w^m_n = S^m_n\varphi^m_n, \qquad S^m_n = -\mathrm{i}\kappa(\kappa R)^2 j_n(\kappa R) h^{(1)}_n(\kappa R)
\]
Therefore, since $S^{\star}=\bar{S}$, we have
\[
  E\varphi = \sum_{n=0}^{\infty} \sum_{m=-n}^{n} Y^m_n(\bfxh) E^m_n \varphi^m_n, \qquad
  E^m_n = -\frac{\overline{h^{(1)}_n(\kappa R)}}{h^{(1)}_n(\kappa R)}
\]
In particular, to evaluate $E \Phik(\cdot-\bfxZ)$, we can set
\[
  \varphi^m_n = (2n\shp1) j_n(\kappa|\bfz|) \, h_n^{(1)}(\kappa R) P_n(\bfxh\sip\bfzh),
\]
which implies
\[
  E^m_n \varphi^m_n = -(2n\shp1) j_n(\kappa|\bfz|) \, \overline{h^{(1)}_n(\kappa R)} P_n(\bfxh\sip\bfzh) = -\overline{\varphi^m_n}
\]

 \section*{References}

\bibliographystyle{plain}

\bibliography{biblio,articles,book,bibliomb,theses,fiora}

\begin{thebibliography}{10}

\bibitem{ammari:13b}
H.~Ammari, E.~Bretin, J.~Garnier, H.~Jing, W.~Kang, and A.~Wahab.
\newblock Localization, stability, and resolution of topological derivative
  based imaging functionals in elasticity.
\newblock {\em SIAM J. Imag. Sci.}, 6:2174--2212, 2013.

\bibitem{agjk:11}
H.~Ammari, J.~Garnier, V.~Jugnon, and H.~Kang.
\newblock Stability and resolution analysis for a topological derivative based
  imaging functional.
\newblock {\em SIAM J. Contr. Opt.}, 50:48--76, 2012.

\bibitem{audibert}
L.~Audibert.
\newblock {\em Qualitative methods for heterogeneous media}.
\newblock PhD thesis, Ecole Polytechnique, Palaiseau, France, 2015.

\bibitem{audibert:haddar:17}
L.~Audibert and H.~Haddar.
\newblock The generalized linear sampling method for limited aperture
  measurements.
\newblock {\em SIAM J. Imag. Sci.}, 10:845--870, 2017.

\bibitem{B-2012-6}
C.~Bellis and M.~Bonnet.
\newblock Qualitative identification of cracks using 3d transient elastodynamic
  topological derivative: formulation and fe implementation.
\newblock {\em Comp. Meth. Appl. Mech. Eng.}, 253:89--105, 2013.

\bibitem{B-2013-01}
C.~Bellis, M.~Bonnet, and F.~Cakoni.
\newblock Acoustic inverse scattering using topological derivative of far-field
  measurements-based $l^2$ cost functionals.
\newblock {\em Inverse Problems}, 29:075012, 2013.

\bibitem{B-2016-06}
M.~Bonnet.
\newblock A modified volume integral equation for anisotropic elastic or
  conducting inhomogeneities. unconditional solvability by neumann series.
\newblock {\em J. Integral Eq. Appl.}, 29:271--295, 2017.

\bibitem{B-2016-12}
M.~Bonnet.
\newblock Inverse acoustic scattering using high-order topological derivatives
  of misfit functional.
\newblock {\em Inverse Prob. Imaging}, 12:921--953, 2018.

\bibitem{B-2003-5}
M.~Bonnet and B.~B. Guzina.
\newblock Sounding of finite solid bodies by way of topological derivative.
\newblock {\em Int. J. Num. Meth. Eng.}, 61:2344--2373, 2004.

\bibitem{cakoni:colton:14}
F.~Cakoni and D.~Colton.
\newblock {\em A qualitative approach to inverse scattering theory}.
\newblock Springer-Verlag, 2014.

\bibitem{cakoni:16}
F.~Cakoni, D.~Colton, and H.~Haddar.
\newblock {\em Inverse scattering theory and transmission eigenvalues}.
\newblock CBMS-NSF Regional Conference Series in Applied Mathematics, vol. 88.
  SIAM, 2016.

\bibitem{cea:mas}
J.~C\'ea, S.~Garreau, P.~Guillaume, and M.~Masmoudi.
\newblock The shape and topological optimization connection.
\newblock {\em Comp. Meth. Appl. Mech. Eng.}, 188:703--726, 2001.

\bibitem{ced:mos:vog}
D.~J. Cedio-Fengya, S.~Moskow, and M.~Vogelius.
\newblock Identification of conductivity imperfections of small diameter by
  boundary measurements. continuous dependence and computational
  reconstruction.
\newblock {\em Inverse Problems}, 14:553--595, 1998.

\bibitem{col:83}
D.~Colton and R.~Kress.
\newblock {\em Integral Equation Methods in Scattering Theory}.
\newblock {John Wiley and sons}, 1983.

\bibitem{dass:kara:05}
G.~Dassios and K.~S. Karadima.
\newblock Time harmonic acoustic scattering in anisotropic media.
\newblock {\em Math. Meth. in Appl. Sc.}, 28:1383--1401, 2005.

\bibitem{dom:gib:esq:05}
N.~Dominguez, V.~Gibiat, and Y.~Esquerr\'e.
\newblock Time domain topological gradient and time reversal analogy: an
  inverse method for ultrasonic target detection.
\newblock {\em Wave Motion}, 42:31--52, 2005.

\bibitem{Esch}
H.~A. Eschenauer, V.~V. Kobelev, and A.~Schumacher.
\newblock Bubble method for topo\-logy and shape optimization of structures.
\newblock {\em Structural Optimization}, 8:42--51, 1994.

\bibitem{B-2005-17}
B.~B. Guzina and M.~Bonnet.
\newblock Small-inclusion asymptotic of misfit functionals for inverse problems
  in acoustics.
\newblock {\em Inverse Problems}, 22:1761--1785, 2006.

\bibitem{boj:07}
B.~B. Guzina and I.~Chikichev.
\newblock From imaging to material identification: a generalized concept of
  topological sensitivity.
\newblock {\em J. Mech. Phys. Solids}, 55:245--279, 2007.

\bibitem{guz:pour:15}
B.~B Guzina and F.~Pourahmadian.
\newblock Why the high-frequency inverse scattering by topological sensitivity
  may work.
\newblock volume 471, page 20150187, 2015.

\bibitem{hu:14}
G.~Hu, J.~Yang, B.~Zhang, and H.~Zhang.
\newblock Near-field imaging of scattering obstacles with the factorization
  method.
\newblock {\em Inverse Problems}, 30:095005, 2014.

\bibitem{kirsch:grinberg}
A.~Kirsch and N.~Grinberg.
\newblock {\em The factorization method for inverse problems}.
\newblock Oxford, 2008.

\bibitem{laurain:13}
A.~Laurain, M.~Hinterm{\"{u}}ller, M.~Freiberger, and H.~Scharfetter.
\newblock Topological sensitivity analysis in fluorescence optical tomography.
\newblock {\em Inverse Problems}, 29:025003, 2013.

\bibitem{lelouer:17}
F.~Le~Lou{\"e}r and Mara-Luisa Rap{\'{u}}n.
\newblock Topological sensitivity for solving inverse multiple scattering
  problems in three-dimensional electromagnetism. part i: One step method.
\newblock {\em SIAM J. Imag. Sci.}, 10:1291--1321, 2017.

\bibitem{soko:zoch:99}
J.~Sokolowski and A.~Zochowski.
\newblock On the topological derivative in shape op\-ti\-mi\-zation.
\newblock {\em SIAM J. Control Optim.}, 37:1251--1272, 1999.

\bibitem{wahab:15}
A.~Wahab.
\newblock Stability and resolution analysis of topological derivative based
  localization of small electromagnetic inclusions.
\newblock {\em SIAM J. Imag. Sci.}, 8:1687--1717, 2015.

\end{thebibliography}
\end{document}